\newtheorem{thm}{Theorem}
\newtheorem*{thm*}{Theorem}
\newtheorem{cor}{Corollary}
\newtheorem{lem}{Lemma}
\newtheorem{rem}{Remark}
\newcommand{\as}{\text{ as }}
\newcommand{\ninf}{n\rightarrow\infty}
\renewcommand{\Re}{\text{Re}}
\renewcommand{\Im}{\text{Im}}
\DeclareMathOperator{\C}{\mathbb{C}}
\DeclareMathOperator{\R}{\mathbb{R}}
\renewcommand{\l}{\left}
\renewcommand{\r}{\right}
\begin{document}

\title[Weyl-Titchmarsh type formula]{Weyl-Titchmarsh type formula for discrete Schr\"odinger operator with Wigner-von Neumann potential}

\author{Jan Janas}
    \address{Instytut Matematyczny Polskiej Akademii Nauk, ul. \'sw. Tomasza 30, 31-027 Krak\'ow, Poland}
    \email{najanas@cyf-kr.edu.pl}

\author{Sergey Simonov}
    \address{Department of Mathematical Physics, Institute of Physics, St. Petersburg University,
    Ulianovskaia 1, St. Petergoff, St. Petersburg, 198904, Russia}
    \email{sergey\_simonov@mail.ru}

\subjclass{47B36,34E10} \keywords{Jacobi matrices, Asymptotics of
generalized eigenvectors, Orthogonal polynomials, Weyl-Titchmarsh
theory, Discrete Schr\"odinger operator, Wigner-von Neumann
potential}
\date{}

\begin{abstract}
We consider discrete Schr\"odinger operator $\mathcal J$ with Wigner-von Neumann potential not belonging to $l^2$.
We find asymptotics of orthonormal polynomials associated to $\mathcal J$.
We prove the Weyl-Titchmarsh type formula, which relates the spectral density of $\mathcal J$ to a coefficient in asymptotics of orthonormal polynomials.
\end{abstract}

\maketitle

\section{Introduction}
In recent papers on Jacobi matrices
\cite{Damanik-Simon-06},\cite{Janas-Naboko-Sheronova-07},\cite{Janas-Moszynski-06},\cite{Naboko-Simonov-10},\cite{Naboko-Pchelintseva-Silva-09},\cite{Moszynski-09}
new results were found on the asymptotics of generalized eigenvectors of these
operators. For real sequences $\{a_n\}_{n=1}^{\infty}$ and $\{b_n\}_{n=1}^{\infty}$ the Jacobi operator $J=J(a_n,b_n)$ is defined in the Hilbert space $l^2$ by the formula
$$ (Ju)_n = a_{n-1}u_{n-1}+ b_nu_n + a_nu_{n+1} .$$
As it is well known the spectral analysis of Jacobi operators is strongly related to the study of the asymptotics
of generalized eigenvectors. In this work we concentrate on the discrete Schr\"odinger operator $\mathcal J = \mathcal J(1,b_n)$ with the Wigner-von Neumann potential

\begin{equation}\label{eq entries}
    b_n=\frac{c\sin(2\omega n+\delta)}{n^{\gamma}}+q_n,
\end{equation}
\begin{equation}\label{eq conditions}
    \gamma\in\l(\frac13;\frac12\r),2\omega\notin\pi\mathbb Z\text{ and }\{q_n\}_{n=1}^{\infty}\in l^1,
\end{equation}
where $c,\omega,\delta,q_n\in\R$.
$\mathcal J$ is the Jacobi operator given by
\begin{equation}\label{eq definition of the operator}
    \begin{array}{l}
    (\mathcal{J}u)_1=b_1u_1+u_2, \\
    (\mathcal{J}u)_n=u_{n-1}+b_nu_n+u_{n+1},\ n\ge2. \\
    \end{array}
\end{equation}
and has a matrix representation in the canonical basis $\{e_n\}_{n=1}^{\infty}$ of $l^2$
\begin{equation*}
    \mathcal J=
    \left(%
    \begin{array}{cccc}
    b_1 & 1 & 0 & \cdots \\
    1 & b_2& 1 & \cdots \\
    0 & 1 & b_3 & \cdots \\
    \vdots & \vdots & \vdots & \ddots \\
    \end{array}%
    \right).
\end{equation*}
Since $\mathcal J$ is a compact perturbation of the free discrete Schr\"odinger operator its essential spectrum is the interval $[-2;2]$.
Frequency $2\omega$ in the potential produces (in general) four critical (or resonance) points inside this interval: $\pm2\cos\omega,\pm2\cos2\omega$.
At these points the resonance occurs and the asymptotics of the generalized eigenvectors changes
(analogous phenomenon in the continuous case is very well studied, see \cite{Harris-Lutz-75}, \cite{Eastham-89}, Chapter 4 and \cite{Kurasov-Naboko-07}, Theorem 5)
and an eigenvalue can appear under certain additional conditions.

In the present paper we are interested in asymptotics of orthogonal polynomials $P_n(\lambda)$ associated to $\mathcal J$, which are
\begin{equation*}
    \begin{array}{l}
      P_1(\lambda):=1,P_2(\lambda):=\lambda-b_1, \\
      P_{n+1}(\lambda):=(\lambda-b_n)P_n(\lambda)-P_{n-1}(\lambda),\ n\ge2,
    \end{array}
\end{equation*}
and the relation of this asymptotics to the spectral density $\rho'(\lambda)$ of $\mathcal J$.

Our main result (see Theorem \ref{thm asymptotics of polynomials} on page \pageref{thm asymptotics of polynomials}
and Theorem \ref{thm spectral density} on page \pageref{thm spectral density} for the exact formulation)
is that there exists a function $F$ ( the Jost function ) such that for a.a. $\lambda\in(-2;2)$,
\begin{equation}\label{eq spectral density intro}
    \rho'(\lambda)=\frac{\sqrt{4-\lambda^2}}{2\pi|F(z)|^2}
\end{equation}
and
\begin{multline*}
    P_n(\lambda)=\frac{zF(z)}{1-z^2}\frac1{z^n}\exp\l(\frac{\mu_2(z)n^{1-2\gamma}}{1-2\gamma}\r)
    \\
    +\frac{z\overline{F(z)}}{z^2-1}z^n\exp\l(-\frac{\mu_2(z)n^{1-2\gamma}}{1-2\gamma}\r)
    +o(1)\as\ninf,
\end{multline*}
where
\begin{equation*}
    \lambda=z+\frac1z,\ z=\frac{\lambda-i\sqrt{4-\lambda^2}}2
\end{equation*}
and
\begin{equation}\label{eq mu-2}
    \mu_2(z):=\frac{c^2z^2(1+z^2)e^{2i\omega}}{4(1-z^2)(z^2-e^{2i\omega})(1-z^2e^{2i\omega})}.
\end{equation}
This result is an analog of the classical Weyl-Titchmarsh (or Kodaira) formula for the differential Schr\"odinger operator on the half-line with summable potential
(see \cite{Titchmarsh-46-1}, Chapter 5 and \cite{Kodaira-49}).

We consider $\gamma\in\l(\frac13;\frac12\r)$ for the following reasons. If $\gamma>\frac12$, then $\{b_n\}_{n=1}^{\infty}\in l^2$ and the sum
\begin{equation*}
    \sum_{n=1}^{\infty}b_n
\end{equation*}
converges. This situation was studied using a completely different
method by Damanik and Simon in \cite{Damanik-Simon-06} for a class of more general Jacobi matrices, with arbitrary sequence $\{b_n\}_{n=1}^{\infty}\in l^2$ for which the series $\sum_{n=1}^{\infty}b_n $ is convergent.
In this case the asymptotics of polynomials $P_n(\lambda)$ is simpler (of Szeg\"o type):
\begin{equation}\label{eq asymptotics of P Damanik-Simon case}
    P_n(\lambda)=\frac{zF(z)}{1-z^2}\cdot\frac1{z^n}+\frac{z\overline{F(z)}}{z^2-1}\cdot z^n+o(1)\as\ninf
\end{equation}
, and this was proven in \cite{Damanik-Simon-06} not for a.a. $\lambda\in(-2;2)$, but in the sense of the convergence in $L_2((-2;2),\rho'(x)dx)$ (Theorem 8.1).
If one proves that \eqref{eq asymptotics of P Damanik-Simon case} holds for a.a. $\lambda\in(-2;2)$ (which for the potential of the form \eqref{eq entries}
is much simpler than the analysis that we develop in the present paper), then \eqref{eq spectral density intro} can be deduced from results of Damanik and Simon
(Theorem 5.6 and Theorem 8.1) in a non-trivial way (this was pointed out by Dr. Roman Romanov in private communication).
On the other hand, the type of asymptotics for $\gamma\in(\frac12;1]$ is the same as in the simple case $\gamma>1$.
For $\gamma=\frac12$ the type of asymptotics changes. This happens also for every $\gamma=\frac1l$, $l\in\mathbb N\backslash\{1\}$.
We are forced to consider different (depending on $l$) number of terms in the asymptotic expansion that we use, see \eqref{eq T-1}.
The greater $l$ is, the more terms are significant. The method that we use works for every $\gamma>0$, but we restrict ourselves to the case
$\gamma\in(\frac13;\frac12)$ to show how it works in the general case. However, in the final section we state the corresponding result for $\gamma\in(\frac12;1]$
(Theorem \ref{thm result in simpler case}) and indicate how to simplify the proof for that case. The main idea of the method is inspired by \cite{Brown-Eastham-McCormack-98} and uses the discrete version
of a change of variables introduced by Eastham in \cite{Eastham-89}.

We could as well consider a finite sum of terms like \eqref{eq entries} as the potential,
\begin{equation*}
    b_n=\sum_{l=1}^L\frac{c_l\sin(2\omega_l n+\delta_l)}{n^{\gamma_l}}+q_n
\end{equation*}
with the same conditions imposed on $c_l,\omega_l,\gamma_l,\delta_l$ and $\{q_n\}_{n=1}^{\infty}$.
This would increase the number of critical points and complicate the notation and calculations, so
we restrict ourselves to the case of one such term only.

In the continuous case (differential Schr\"odinger operator on the half-line) Wigner-von Neumann potentials were studied in numerous works:
\cite{Matveev-73},\cite{Behncke-91-I},\cite{Behncke-91-II},\cite{Hinton-Klaus-Shaw-91},\cite{Klaus-91},\cite{Kurasov-92},\cite{Kurasov-96},\cite{Brown-Eastham-McCormack-98},\cite{Kurasov-Naboko-07},
and formulas for the spectral density analogous to \eqref{eq spectral density intro} were obtained in different variations in \cite{Matveev-73},\cite{Behncke-91-I} and \cite{Brown-Eastham-McCormack-98}.

 The second name author plans to use the Weyl-Titchmarsh type formula to study the spectral density of the Jacobi matrix $\mathcal J$. For the case $\gamma=1$ there are only two critical points: $\pm2\cos\omega$.
Analyzing the change of asymptotics of $P_n(\lambda)$ as $\lambda$ approaches the critical value $\lambda_0\in\{\pm2\cos\omega\}$
(remind that the type of asymptotics changes at $\lambda=\lambda_0$), we plan to prove in the forthcoming paper for the case $\gamma=1$ that
\begin{equation}\label{eq forthcoming}
    \rho'(\lambda)\sim\text{const}\cdot|\lambda-\lambda_0|^{\frac{|c|}{2|\sin\omega|}}\as\lambda\rightarrow\lambda_0,
\end{equation}
if $\{P_n(\lambda_0)\}_{n=1}^{\infty}$ is not a subordinate solution of the spectral equation.

Genralization to the case $\gamma<1$ is possible, but the problem seems to be much more difficult than for $\gamma=1$.

\section{Preliminaries}

For every complex $\lambda$ the spectral equation for $\mathcal J$
\begin{equation}\label{eq spectral equation prelim}
    u_{n-1}+b_nu_n+u_{n+1}=\lambda u_n,\ n\ge2
\end{equation}
has solutions $P_n(\lambda)$ (orthogonal polynomials of the first kind) and $Q_n(\lambda)$ (orthogonal polynomials of the second kind) such that
\begin{equation*}
    \begin{array}{l}
    P_1(\lambda)=1,\ P_2(\lambda)=\lambda-b_1
    \\
    Q_1(\lambda)=0,\ Q_2(\lambda)=1.
    \end{array}
\end{equation*}
For non-real values of $\lambda$ there exists $m(\lambda)$ (the Weyl function) such that
\begin{equation*}
    Q_n(\lambda)+m(\lambda)P_n(\lambda)
\end{equation*}
is \cite{Akhiezer-65} the unique (up to multiplication by a constant) solution of \eqref{eq spectral equation prelim}
that belongs to $l^2$. The Weyl function and the spectral density of $\mathcal J$ are related by the following equalities:
\begin{equation*}
    m(\lambda)=\int\limits_{\R}\frac{d\rho(x)}{x-\lambda},\
    \lambda\in\mathbb C\backslash\mathbb R,
\end{equation*}
\begin{equation*}
    \rho'(\lambda)=\frac1{\pi}\Im\, m(\lambda+i0)\text{ for a.a. }\lambda\in\mathbb R.
\end{equation*}
For every two solutions $u$ and $v$ of \eqref{eq spectral equation prelim} with the same parameter $\lambda$ the (discrete) Wronskian defined by
\begin{equation*}
    W(u,v):=u_nv_{n+1}-u_{n+1}v_n
\end{equation*}
is independent of $n$. For polynomials $P(\lambda)$ and $Q(\lambda)$ the Wronskian is equal to one for every $\lambda$.

\section{Reduction of the spectral equation to a
\\system of the L-diagonal form}

Consider the spectral equation for $\mathcal J$,
\begin{equation}\label{eq spectral equation}
    u_{n-1}+b_nu_n+u_{n+1}=\lambda u_n,\ n\ge2.
\end{equation}
Let us write it in the vector form,
\begin{equation}\label{eq spectral equation, vector form}
    \left(%
        \begin{array}{c}
        u_n \\
        u_{n+1} \\
        \end{array}%
    \right)
    =\left(%
        \begin{array}{cc}
        0 & 1 \\
        -1 & \lambda-b_n \\
        \end{array}%
    \right)
    \left(%
        \begin{array}{c}
        u_{n-1} \\
        u_n \\
        \end{array}%
    \right),
    \ n\ge 2.
\end{equation}
Consider a new parameter $z\in\overline{\mathbb D}$ (we denote by $\mathbb D$ the open unit disc) such that
\begin{equation*}
    \lambda=z+\frac1z
\end{equation*}
and conversely
\begin{equation*}
    z=\frac{\lambda-i\sqrt{4-\lambda^2}}2,
\end{equation*}
where the branch of the square root is chosen so that $z\in\mathbb D$ for $\lambda\in\C\backslash[-2;2]$, i.e., $z=-i$ for $\lambda=0$. Let
\begin{equation*}
    v_n:=\frac z{z^2-1}
    \left(%
    \begin{array}{cc}
    z & -1 \\
    -\frac1z & 1 \\
    \end{array}%
    \right)
    \left(%
    \begin{array}{c}
    u_n \\
    u_{n+1} \\
    \end{array}%
    \right),
\end{equation*}
which is equivalent to
\begin{equation*}
    \left(%
        \begin{array}{c}
        u_n \\
        u_{n+1} \\
        \end{array}%
    \right)
    =
    \left(%
        \begin{array}{cc}
        1 & 1 \\
        \frac1z & z\\
        \end{array}%
    \right)
    v_n,\ n\ge1.
\end{equation*}
We make this substitution to diagonalize the constant part of the coefficient matrix
\begin{equation*}
     \left(%
    \begin{array}{cc}
    0 & 1 \\
    -1 & \lambda\\
    \end{array}%
    \right)
    =
    \left(%
        \begin{array}{cc}
        1 & 1 \\
        \frac1z & z\\
        \end{array}%
    \right)
    \left(%
    \begin{array}{cc}
    \frac1z & 0 \\
    0 & z \\
    \end{array}%
    \right)
    \left(%
        \begin{array}{cc}
        1 & 1 \\
        \frac1z & z\\
        \end{array}%
    \right)
    ^{-1}.
\end{equation*}
Equation \eqref{eq spectral equation, vector form} becomes
\begin{equation}\label{eq system for v before transformation}
    v_{n+1}=\l[
    \left(%
        \begin{array}{cc}
        \frac1z & 0 \\
        0 & z \\
        \end{array}%
    \right)
    +\frac{b_{n+1}}{z^2-1}
    \left(%
        \begin{array}{cc}
        1 & z^2 \\
        -1 & -z^2 \\
        \end{array}%
    \right)
    \r]
    v_n,n\ge1.
\end{equation}
The goal of the present section is to reduce the system \eqref{eq system for v before transformation} to the "L-diagonal form". If we put
\begin{equation*}
    w_n:=T^{-1}_n(z)v_n,
\end{equation*}
then \eqref{eq system for v before transformation} becomes
\begin{equation*}
    w_{n+1}=T^{-1}_{n+1}(z)
    \l[
    \left(%
        \begin{array}{cc}
        \frac1z & 0 \\
        0 & z \\
        \end{array}%
    \right)
    +\frac{b_{n+1}}{z^2-1}
    \left(%
        \begin{array}{cc}
        1 & z^2 \\
        -1 & -z^2 \\
        \end{array}%
    \right)
    \r]
    T_n(z)w_n.
\end{equation*}
The system is in L-diagonal form if the coefficient matrix is a sum of diagonal and summable matrices.
So we have to find matrices $T_n(z)$ to provide this property. This is possible not for every $z\neq0,1,-1$.

Let us denote
\begin{equation}\label{eq U}
    U:=\C\backslash\{0,1,-1,e^{\pm i\omega},-e^{\pm i\omega},e^{\pm 2i\omega},-e^{\pm 2i\omega}\}.
\end{equation}

    \begin{lem}\label{lem T-n}
    Let $z\in U$. For every $n\in\mathbb N$ there exist matrices $R_n^{(2)}(z)$ and invertible matrices $T_n(z)$ such that
    \begin{equation*}
        T_n(z), T_n^{-1}(z)=I+o(1),
    \end{equation*}
    \begin{equation}\label{eq estimate}
        R_n^{(2)}(z)=O\l(\frac1{n^{3\gamma}}+|q_{n+1}|\r)
    \end{equation}
    as $\ninf$ and
    \begin{multline*}
        T_{n+1}^{-1}(z)
        \l[
        \left(%
            \begin{array}{cc}
            \frac1z & 0 \\
            0 & z \\
            \end{array}%
        \right)
        +\frac{b_{n+1}}{z^2-1}
        \left(%
            \begin{array}{cc}
            1 & z^2 \\
            -1 & -z^2 \\
            \end{array}%
        \right)
        \r]
        T_n(z)
        \\
        =
        \left(%
    \begin{array}{cc}
    \frac1z\l(1+\frac{\mu_2(z)}{n^{2\gamma}}\r) & 0 \\
    0 & z\l(1-\frac{\mu_2(z)}{n^{2\gamma}}\r) \\
    \end{array}%
    \right)
        +R_n^{(2)}(z),
    \end{multline*}
    where $\mu_2(z)$ is given by \eqref{eq mu-2}.
    $T_n(z)$ and $R_n^{(2)}(z)$ are also analytic in $U$ for all $n$, and on every compact subset of $U$ the estimate \eqref{eq estimate}
    is uniform with respect to $z$.
    \end{lem}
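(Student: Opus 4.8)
The plan is to obtain $T_n(z)$ as a product $T_n(z)=T_n^{(1)}(z)T_n^{(2)}(z)$ of two near-identity transformations, via the iterative diagonalization scheme of \cite{Eastham-89}, \cite{Brown-Eastham-McCormack-98}. Since $\gamma\in\left(\frac13;\frac12\right)$ one has $n^{-\gamma},n^{-2\gamma}\notin l^1$ but $n^{-3\gamma}\in l^1$, so exactly two steps are needed: the first removes the non-summable $O(n^{-\gamma})$ part of the perturbation, the second removes the remaining non-summable $O(n^{-2\gamma})$ part apart from a diagonal term that cannot be removed and which produces $\mu_2(z)$. As preparation, expand $\sin(2\omega(n+1)+\delta)$ into $e^{\pm2i\omega n}$ with constant coefficients and use $(n+1)^{-\gamma}=n^{-\gamma}+O(n^{-\gamma-1})$ with $\{n^{-\gamma-1}\}\in l^1$; this rewrites the coefficient matrix in \eqref{eq system for v before transformation} as $\Lambda(z)+n^{-\gamma}\bigl(A^{+}(z)e^{2i\omega n}+A^{-}(z)e^{-2i\omega n}\bigr)+\rho_n^{(1)}(z)$, where $\Lambda(z):=\mathrm{diag}\left(\frac1z,z\right)$, the matrices $A^{\pm}(z)$ have entries polynomial in $z,\frac1z$, and $\rho_n^{(1)}(z)=O(n^{-\gamma-1}+|q_{n+1}|)\in l^1$ uniformly on compacts.

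For the first step put $T_n^{(1)}(z)=I+n^{-\gamma}\bigl(N^{+}(z)e^{2i\omega n}+N^{-}(z)e^{-2i\omega n}\bigr)$ and compute $\bigl(T_{n+1}^{(1)}(z)\bigr)^{-1}[\,\cdots\,]T_n^{(1)}(z)$; the condition that the $O(n^{-\gamma})$ part of the outcome vanish is the pair of linear equations $\Lambda(z)N^{\pm}(z)-e^{\pm2i\omega}N^{\pm}(z)\Lambda(z)=-A^{\pm}(z)$, whose $(i,j)$ entry carries the factor $\Lambda_{ii}(z)-e^{\pm2i\omega}\Lambda_{jj}(z)$. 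On $\mathbb{C}$ the only vanishings of these factors are the off-diagonal ones, at $z=\pm e^{\pm i\omega}$ (the diagonal ones are nonzero because $e^{2i\omega}\neq1$ by \eqref{eq conditions}), so on $U$ the equations are uniquely solvable with $N^{\pm}(z)$ rational in $z$; the transformed coefficient matrix then equals $\Lambda(z)+P_n^{(1)}(z)+\rho_n^{(2)}(z)$, where $P_n^{(1)}(z)=O(n^{-2\gamma})$ is a trigonometric polynomial in $n$ of frequencies $0,\pm2\omega,\pm4\omega$ (arising from products of two $O(n^{-\gamma})$ factors) and $\rho_n^{(2)}(z)\in l^1$.

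For the second step put $T_n^{(2)}(z)=I+n^{-2\gamma}\bigl(N_0(z)+N_2^{+}(z)e^{2i\omega n}+N_2^{-}(z)e^{-2i\omega n}+N_4^{+}(z)e^{4i\omega n}+N_4^{-}(z)e^{-4i\omega n}\bigr)$ and repeat. The balance at order $n^{-2\gamma}$ is again, frequency by frequency, a linear equation with $(i,j)$ factor $\Lambda_{ii}(z)-e^{ik\omega}\Lambda_{jj}(z)$, $k\in\{0,\pm2,\pm4\}$; on $\mathbb{C}$ its vanishings are: for $k=0$, the two diagonal factors (identically) and the off-diagonal ones at $z=\pm1$; for $k=\pm2$, off-diagonal at $z=\pm e^{\pm i\omega}$; for $k=\pm4$, off-diagonal at $z=\pm e^{\pm2i\omega}$ and --- if $e^{4i\omega}=1$ --- also diagonal, which is the last place \eqref{eq conditions} enters. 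Hence on $U$ every component of $P_n^{(1)}(z)$ is removable except its non-oscillating diagonal part, say $n^{-2\gamma}D(z)$; composing the two transformations explicitly gives $D(z)=\mathrm{diag}\left(\frac{\mu_2(z)}{z},-z\mu_2(z)\right)$ with $\mu_2(z)$ as in \eqref{eq mu-2}. This is the asserted normal form, with $R_n^{(2)}(z)$ collecting the rest: the new cross terms are $O(n^{-\gamma}\cdot n^{-2\gamma})=O(n^{-3\gamma})$, and every term containing $\{q_n\}$ or a power correction lies in $l^1$, so $R_n^{(2)}(z)=O\left(n^{-3\gamma}+|q_{n+1}|\right)$. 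Setting $T_n(z):=T_n^{(1)}(z)T_n^{(2)}(z)$ yields $T_n(z),T_n^{-1}(z)=I+O(n^{-\gamma})=I+o(1)$, hence $T_n(z)$ is invertible for all large $n$; for the finitely many small $n$ one takes $T_n(z):=I$ and absorbs the finitely many bounded extra terms into $R_n^{(2)}(z)$. Analyticity of $T_n(z)$ and $R_n^{(2)}(z)$ on $U$, and uniformity of \eqref{eq estimate} on a compact $K\subset U$, follow because the $N^{\pm}(z),N_k(z)$ solve linear systems with polynomial coefficients whose determinants are products of the factors above, so are rational with poles only at the excluded points and bounded on $K$.

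The only delicate part is the second step: one must track every second-order contribution of $T_n^{(1)}(z)$ --- the products of the oscillating matrices $N^{\pm}(z)e^{\pm2i\omega n}$ with the original $O(n^{-\gamma})$ perturbation and with $\Lambda(z)$ --- project onto the non-oscillating diagonal component, and verify that it collapses exactly to \eqref{eq mu-2}. Checking, in both cohomological steps, that the resonance denominators are precisely the factors vanishing on $\mathbb{C}\setminus U$ is likewise part of the structural core of the lemma.
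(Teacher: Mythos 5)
Your proposal is correct and follows essentially the same two-step Eastham-type diagonalization as the paper's proof; the only cosmetic differences are that the paper writes the transformations as exponentials $\exp(\cdot)$ of the correction matrices (so invertibility holds for every $n$ without your separate patch for small $n$) and folds the $e^{\pm4i\omega n}n^{-2\gamma}$ cancellation into the first transformation $T_n^{(1)}$, leaving only the off-diagonal part of the constant $n^{-2\gamma}$ term for $T_n^{(2)}$. Your identification of the resonance denominators, of the excluded set $U$, and of the non-removable constant diagonal term producing $\mu_2(z)$ matches the paper's argument.
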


\begin{proof}
Let us denote
\begin{equation*}
    \begin{array}{l}
    \Lambda(z):=
    \left(%
        \begin{array}{cc}
        \frac1z & 0 \\
        0 & z \\
        \end{array}%
    \right),
    \\
    N_2(z):=
    \frac{ce^{i(2\omega+\delta)}}{2i(z^2-1)}
    \left(%
        \begin{array}{cc}
        1 & z^2 \\
        -1 & -z^2 \\
        \end{array}%
    \right),
    \\
    N_{-2}(z):=
    -\frac{ce^{-i(2\omega+\delta)}}{2i(z^2-1)}
    \left(%
        \begin{array}{cc}
        1 & z^2 \\
        -1 & -z^2 \\
        \end{array}%
    \right),
    \\
    R_n^{(0)}(z):=\frac{q_{n+1}}{z^2-1}
    \left(%
    \begin{array}{cc}
    1 & z^2 \\
    -1 & -z^2 \\
    \end{array}%
    \right),
    \end{array}
\end{equation*}
so that
\begin{equation*}
    \frac{b_{n+1}}{z^2-1}
    \left(%
    \begin{array}{cc}
    1 & z^2 \\
    -1 & -z^2 \\
    \end{array}%
    \right)
    =\frac{e^{2i\omega n}}{n^{\gamma}}N_2(z)+\frac{e^{-2i\omega n}}{n^{\gamma}}N_{-2}(z)+R_n^{(0)}(z).
\end{equation*}
We will find $T_n(z)$ in two steps.

At the first (and main) step of the construction of $T_n(z)$ let us find matrices $T_n^{(1)}(z)$ such that
\begin{multline}\label{eq main transformation}
    \l(T_{n+1}^{(1)}(z)\r)^{-1}
    \l[
    \left(%
        \begin{array}{cc}
        \frac1z & 0 \\
        0 & z \\
        \end{array}%
    \right)
    +\frac{b_{n+1}}{z^2-1}
    \left(%
        \begin{array}{cc}
        1 & z^2 \\
        -1 & -z^2 \\
        \end{array}%
    \right)
    \r]
    T_n^{(1)}(z)
    \\
    =
    \left(%
        \begin{array}{cc}
        \frac1z & 0 \\
        0 & z \\
        \end{array}%
    \right)
    +\frac{V(z)}{n^{2\gamma}}+R_n^{(1)}(z),
\end{multline}
where $V(z)$ does not depend on $n$ and $R_n^{(1)}(z)$ is summable.

Following the ideas of \cite{Eastham-89} and \cite{Brown-Eastham-McCormack-98}
we look for $T_n^{(1)}(z)$ of the form
\begin{multline}\label{eq T-1}
    T_n^{(1)}(z):=\exp
    \l(
    \frac{e^{2i\omega n}}{n^{\gamma}}X_2(z)+\frac{e^{-2i\omega n}}{n^{\gamma}}X_{-2}(z)
    \r.
    \\
    \l.
    +\frac{e^{4i\omega n}}{n^{2\gamma}}X_4(z)+\frac{e^{-4i\omega n}}{n^{2\gamma}}X_{-4}(z)
    \r),
\end{multline}
where $X_{\pm2}(z)$ and $X_{\pm4}(z)$ are to be determined. Define
\begin{multline}\label{eq M-pm}
    M_{\pm4}:=\frac12(\Lambda{X_{\pm2}}^2+e^{\pm4i\omega}{X_{\pm2}}^2\Lambda)+N_{\pm2}X_{\pm2}
    \\
    -e^{\pm2i\omega}X_{\pm2}N_{\pm2}-e^{\pm2i\omega}X_{\pm2}\Lambda X_{\pm2}
\end{multline}
and
\begin{multline}\label{eq V}
    V:=\frac12(\Lambda(X_2X_{-2}+X_{-2}X_2)+(X_2X_{-2}+X_{-2}X_2)\Lambda)+N_2X_{-2}
    \\
    +N_{-2}X_2-(e^{2i\omega}X_2N_{-2}+e^{-2i\omega}X_{-2}N_2)-(e^{2i\omega}X_2\Lambda X_{-2}+e^{-2i\omega}X_{-2}\Lambda X_2).
\end{multline}
Take expansions of $T_n^{(1)}$ and $(T_{n+1}^{(1)})^{-1}$ as $n\rightarrow\infty$ up to the terms of the order $\frac1{n^{2\gamma}}$.
After a long but transparent calculation we have:
\begin{multline}\label{eq long but transparent calculation}
    \l(T_{n+1}^{(1)}\r)^{-1}\l[\Lambda+\frac{e^{2i\omega n}}{n^{\gamma}}N_2+\frac{e^{-2i\omega n}}{n^{\gamma}}N_{-2}+R_n^{(0)}\r]T_n^{(1)}
    \\
    =
    \Lambda+\frac{e^{2i\omega n}}{n^{\gamma}}[N_2+\Lambda X_2-e^{2i\omega}X_2\Lambda]
    +\frac{e^{-2i\omega n}}{n^{\gamma}}[N_{-2}+\Lambda X_{-2}-e^{-2i\omega}X_{-2}\Lambda]
    \\
    +\frac{e^{4i\omega n}}{n^{2\gamma}}[M_4+\Lambda X_4-e^{4i\omega}X_4\Lambda]
    +\frac{e^{-4i\omega n}}{n^{2\gamma}}[M_{-4}+\Lambda X_{-4}-e^{-4i\omega}X_{-4}\Lambda]
    \\
    +\frac V{n^{2\gamma}}+O\l(\frac1{n^{3\gamma}}+|q_{n+1}|\r)\as\ninf,
\end{multline}
since $\|R_n^{(0)}\|=O(|q_{n+1}|)$.

We want to cancel the coefficients at $e^{\pm2i\omega n}$ and $e^{\pm4i\omega n}$ in \eqref{eq long but transparent calculation}
by suitable choice of $X_{\pm2}(z)$ and $X_{\pm4}(z)$, respectively. To this end, four conditions should be satisfied:
\begin{equation*}
    \begin{array}{l}
    e^{2i\omega}X_2\Lambda-\Lambda X_2=N_2,
    \\
    e^{-2i\omega}X_{-2}\Lambda-\Lambda X_{-2}=N_{-2},
    \\
    e^{4i\omega}X_4\Lambda-\Lambda X_4=M_4,
    \\
    e^{-4i\omega}X_{-4}\Lambda-\Lambda X_{-4}=M_{-4},
    \end{array}
\end{equation*}
We use the following lemma to solve them.

    \begin{lem}\label{lem commutator equation}
    If $\mu\neq1,z^2,\frac1{z^2}$, then the matrix
    \begin{equation*}
        X=
        \left(%
        \begin{array}{cc}
        \frac{zf_{11}}{\mu-1} & \frac{zf_{12}}{z^2\mu-1} \\
        \frac{zf_{21}}{\mu-z^2} & \frac{f_{22}}{z(\mu-1)} \\
        \end{array}%
        \right),
    \end{equation*}
    satisfies the equation
    \begin{equation*}
        \mu X\Lambda-\Lambda X=
        \left(%
        \begin{array}{cc}
        f_{11} & f_{12} \\
        f_{21} & f_{22} \\
        \end{array}%
        \right).
    \end{equation*}
    \end{lem}

\begin{proof}
The assertion can be verified by direct substitution.
\end{proof}

It follows that we can take
\begin{equation}\label{eq X-pm2}
    \begin{array}{l}
    X_2(z):=\frac{ce^{i(2\omega+\delta)}}{2i(z^2-1)}
    \left(%
        \begin{array}{cc}
        \frac z{e^{2i\omega}-1} & \frac{z^3}{z^2e^{2i\omega}-1} \\
        -\frac z{e^{2i\omega}-z^2} & -\frac z{e^{2i\omega}-1} \\
        \end{array}%
    \right),
    \\
    X_{-2}(z):=-\frac{ce^{-i(2\omega+\delta)}}{2i(z^2-1)}
    \left(%
        \begin{array}{cc}
        \frac z{e^{-2i\omega}-1} & \frac{z^3}{z^2e^{-2i\omega}-1} \\
        -\frac z{e^{-2i\omega}-z^2} & -\frac z{e^{-2i\omega}-1} \\
        \end{array}%
    \right)
    \end{array}
\end{equation}
and
\begin{equation*}
    X_{\pm4}(z):=
    \left(%
        \begin{array}{cc}
        \frac{z(M_{\pm4}(z))_{11}}{e^{\pm4i\omega}-1} & \frac{z(M_{\pm4}(z))_{12}}{z^2e^{\pm4i\omega}-1} \\
        \frac{z(M_{\pm4}(z))_{21}}{e^{\pm4i\omega}-z^2} & \frac{(M_{\pm4}(z))_{22}}{z(e^{\pm4i\omega}-1)} \\
        \end{array}%
    \right),
\end{equation*}
where $(M^{\pm}(z))_{11,12,21,22}$ are the entries of the matrix $M_{\pm4}(z)$, which are given by \eqref{eq M-pm} and \eqref{eq X-pm2}.
As we see, $X_{\pm2}(z)$ are defined and analytic in $\C\backslash\{1,-1,e^{\pm i\omega},-e^{\pm i\omega}\}$,
$M_{\pm4}(z)$ and $V(z)$ are defined and analytic in $\C\backslash\{0,1,-1,e^{\pm i\omega},-e^{\pm i\omega}\}$ and
$X_{\pm4}(z),R_n^{(2)}(z),T_n^{(1)}(z),(T_n^{(1)}(z))^{-1}$ are defined and analytic in $U$.

With this choice of $T_n^{(1)}(z)$  the remainder
\begin{multline*}
R_n^{(1)}(z):=\l(T_{n+1}^{(1)}(z)\r)^{-1}
    \l[
    \Lambda(z)+\frac{e^{2i\omega n}}{n^{\gamma}}N_2(z)
    \r.
    \\
    \l.
    +\frac{e^{-2i\omega n}}{n^{\gamma}}N_{-2}(z)+R_n^{(0)}(z)
    \r]
    T_n^{(1)}(z)-\Lambda(z)-\frac{V(z)}{n^{2\gamma}}
\end{multline*}
satisfies the estimate
\begin{equation*}
    \|R_n^{(1)}(z)\|=O\l(\frac1{n^{3\gamma}}+|q_{n+1}|\r)\as\ninf
\end{equation*}
uniformly with respect to $z$ on every compact subset of $U$.

At the second step of the construction of matrices $T_n(z)$ let us consider
\begin{equation*}
    T_n^{(2)}(z):=\exp\l(\frac{Y(z)}{n^{2\gamma}}\r)
\end{equation*}
with some $Y(z)$, which is to be determined. Taking the expansions of $T_n^{(2)}$ and $(T_{n+1}^{(2)})^{-1}$ up to the order of $\frac1{n^{2\gamma}}$, we have:
\begin{multline*}
    \l(T_{n+1}^{(2)}\r)^{-1}\l(\Lambda+\frac V{n^{2\gamma}}+R_n^{(1)}\r)T_n^{(2)}
    \\
    =\Lambda+\frac1{n^{2\gamma}}(V-[Y,\Lambda])+O\l(\frac1{n^{3\gamma}}+|q_{n+1}|\r)\as\ninf.
\end{multline*}
Let us cancel the anti-diagonal entries of $V-[Y,\Lambda]$ by the choice of $Y$. This leads to the equation
\begin{equation*}
    [Y,\Lambda]
    =
    \left(%
        \begin{array}{cc}
        0 & V_{12} \\
        V_{21} & 0 \\
        \end{array}%
    \right).
\end{equation*}
We can take
\begin{equation*}
    Y(z):=\frac z{z^2-1}
    \left(%
        \begin{array}{cc}
        0 & V_{12}(z) \\
        -V_{21}(z) & 0 \\
        \end{array}%
    \right).
\end{equation*}
What rests is
\begin{equation*}
    \text{diag }V(z)=\mu_2(z)
    \left(%
    \begin{array}{cc}
    \frac1z & 0 \\
    0 & z \\
    \end{array}%
    \right),
\end{equation*}
which can be seen from \eqref{eq V} and \eqref{eq X-pm2} by a straightforward calculation. Matrices $Y(z),T_n^{(2)}(z)$ and $(T_n^{(2)}(z))^{-1}$
are defined and analytic in $U$. The remainder in the system after the transformation,
\begin{multline*}
    R_n^{(2)}(z):=
    \l(T_{n+1}^{(2)}(z)\r)^{-1}\l(\Lambda(z)+\frac{V(z)}{n^{2\gamma}}+R_n^{(1)}(z)\r)T_n^{(2)}(z)
    \\
    -\Lambda(z)-\frac{\text{diag}V(z)}{n^{2\gamma}},
\end{multline*}
satisfies the estimate
\begin{equation*}
    \|R_n^{(2)}(z)\|=O\l(\frac1{n^{3\gamma}}+|q_{n+1}|\r)\as\ninf
\end{equation*}
uniformly with respect to $z$ on every compact subset of $U$.
Taking
\begin{equation*}
    T_n(z):=T_n^{(1)}(z)T_n^{(2)}(z)
\end{equation*}
we complete the proof.
\end{proof}

Finally, we have come to the following system of the L-diagonal form:
\begin{equation}\label{eq system for w}
    w_{n+1}=\l[
    \left(%
        \begin{array}{cc}
        \frac1z\l(1+\frac{\mu_2(z)}{n^{2\gamma}}\r) & 0
        \\
        0 & z\l(1-\frac{\mu_2(z)}{n^{2\gamma}}\r)
        \end{array}%
    \right)
    +R_n^{(2)}(z)\r]w_n.
\end{equation}
It is easy to check that for $z\in\mathbb T\cap U$ the value $\mu_2(z)$ is pure imaginary.

\section{Asymptotic results}

In this section we prove several results needed for the analysis of the system \eqref{eq system for w}. They are more or less standard, and the approach
is similar to \cite{Janas-Moszynski-03}, \cite{Silva-04} and \cite{Benzaid-Lutz-87}.
In the cited papers, the existence of a base of solutions with special asymptotic behavior is proven. Here we find the asymptotic of (roughly speaking) generic solution
defined by its initial value.

Let us use the following notation.
\begin{equation*}
    \begin{array}{c}
    \sum\limits_{n=1}^0:=0,\ \prod\limits_{n=1}^0:=I,
    \\
    \text{ and for } n_1,n_2\ge1,
    \\
    \sum\limits_{n=n_1}^{n_2}:=\sum\limits_{n=1}^{n_2}-\sum\limits_{n=1}^{n_1-1},
    \\
    \prod\limits_{n=n_1}^{n_2}:=\prod\limits_{n=1}^{n_2}\l(\prod\limits_{n=1}^{n_1-1}\r)^{-1}.
    \end{array}
\end{equation*}
The first lemma is a kind of discrete variation of parameters.

    \begin{lem}\label{lem variation}
    Let $f\in\C^2$ and let the  matrices $\Lambda_n$ be invertible for every $n\ge1$.
    If for every $n\ge1$
    \begin{equation}\label{eq variation of parameters}
        x_n=\l(\prod_{l=1}^{n-1}\Lambda_l\r)f+\sum_{k=1}^{n-1}\l(\prod_{l=k+1}^{n-1}\Lambda_l\r)R_kx_k,
    \end{equation}
    then
    \begin{equation}\label{eq system most general}
        x_{n+1}=(\Lambda_n+R_n)x_n,
    \end{equation}
    for every $n\ge1 .$
    \end{lem}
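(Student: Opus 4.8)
The plan is to establish \eqref{eq system most general} by a direct algebraic manipulation of the defining identity \eqref{eq variation of parameters}, using only the conventions for empty sums and products fixed just before the lemma; note that invertibility of the $\Lambda_l$ is not actually needed for this implication. Fix $n\ge1$ and write \eqref{eq variation of parameters} with $n$ replaced by $n+1$:
\begin{equation*}
    x_{n+1}=\left(\prod_{l=1}^{n}\Lambda_l\right)f+\sum_{k=1}^{n}\left(\prod_{l=k+1}^{n}\Lambda_l\right)R_kx_k.
\end{equation*}

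First I would peel off the $k=n$ summand. By the empty-product convention $\prod_{l=n+1}^{n}\Lambda_l=I$, so this summand equals exactly $R_nx_n$. In each of the remaining contributions (the $f$-term and the terms with $k=1,\dots,n-1$) the product carries one extra left factor $\Lambda_n$ relative to the corresponding term in the formula for $x_n$, since $\prod_{l=1}^{n}\Lambda_l=\Lambda_n\prod_{l=1}^{n-1}\Lambda_l$ and $\prod_{l=k+1}^{n}\Lambda_l=\Lambda_n\prod_{l=k+1}^{n-1}\Lambda_l$ (the products being ordered with the larger index on the left, which is precisely what makes the recursion \eqref{eq system most general} come out right). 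Factoring this $\Lambda_n$ out of all of those terms gives
\begin{equation*}
    x_{n+1}=\Lambda_n\left[\left(\prod_{l=1}^{n-1}\Lambda_l\right)f+\sum_{k=1}^{n-1}\left(\prod_{l=k+1}^{n-1}\Lambda_l\right)R_kx_k\right]+R_nx_n,
\end{equation*}
and the bracket is precisely $x_n$ by \eqref{eq variation of parameters} applied at the index $n$. Hence $x_{n+1}=\Lambda_nx_n+R_nx_n=(\Lambda_n+R_n)x_n$, which is the claim. (For $n=1$ the same computation degenerates: \eqref{eq variation of parameters} gives $x_1=f$ and $x_2=\Lambda_1f+R_1f=(\Lambda_1+R_1)x_1$, in agreement.)

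Since the argument is entirely formal, there is no genuine obstacle here; the only thing requiring care is the bookkeeping of the empty-sum/empty-product conventions and the multiplication order in $\prod_{l=k+1}^{n-1}\Lambda_l$, so that the extracted factor $\Lambda_n$ ends up on the left and the bracketed expression matches \eqref{eq variation of parameters} verbatim. Invertibility of the $\Lambda_l$ is relevant only for giving meaning to the notation $\prod_{l=n_1}^{n_2}:=\prod_{l=1}^{n_2}\bigl(\prod_{l=1}^{n_1-1}\bigr)^{-1}$ when $n_1>n_2$, and presumably for the converse passage from \eqref{eq system most general} back to \eqref{eq variation of parameters} that will be used later.
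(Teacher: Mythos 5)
Your argument is correct and is, up to presentation, the same computation the paper performs: the paper substitutes $y_n^{(1)}:=\bigl(\prod_{l=1}^{n-1}\Lambda_l\bigr)^{-1}x_n$, so that \eqref{eq variation of parameters} telescopes to $y_{n+1}^{(1)}-y_n^{(1)}=\bigl(\prod_{l=1}^{n}\Lambda_l\bigr)^{-1}R_nx_n$, and this is exactly your ``peel off the $k=n$ term and factor $\Lambda_n$ on the left'' step after multiplying through by $\bigl(\prod_{l=1}^{n}\Lambda_l\bigr)^{-1}$. Your version has the minor additional merit of noting explicitly that invertibility of the $\Lambda_l$ is used only to give meaning to the partial-product notation, not for the implication itself.
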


\begin{proof}
Consider
\begin{equation*}
    y_n^{(1)}:=\l(\prod_{l=1}^{n-1}\Lambda_l\r)^{-1}x_n.
\end{equation*}
We can rewrite \eqref{eq system most general} as
\begin{equation}\label{eq no 1}
    y_{n+1}^{(1)}-y_n^{(1)}=\l(\prod_{l=1}^n\Lambda_l\r)^{-1}R_nx_n.
\end{equation}
At the same time \eqref{eq variation of parameters} is equivalent to
\begin{equation}\label{eq equation for y}
    y_n^{(1)}=f+\sum_{k=1}^{n-1}\l(\prod_{l=1}^k\Lambda_l\r)^{-1}R_kx_k.
\end{equation}
Clearly, \eqref{eq no 1} follows from \eqref{eq equation for y}.
\end{proof}

    \begin{rem}
    Lemma \ref{lem variation} says that $x$ given by \eqref{eq variation of parameters} is the
    solution of the system \eqref{eq system most general} with $x_1=f$.
    Every solution $x$ of \eqref{eq system most general} can be represented in the form
    \eqref{eq variation of parameters} with $f:=x_1$.
    \end{rem}

In what follows let us consider systems with matrices $\Lambda_n$ of the form
\begin{equation*}
    \Lambda_n:=
    \left(%
        \begin{array}{cc}
        \lambda_n & 0 \\
        0 & \frac1{\lambda_n} \\
        \end{array}%
    \right),
\end{equation*}
where $\lambda_n\in\C$. The following lemma gives an estimate on growth of solutions
of the system \eqref{eq system most general}.

    \begin{lem}\label{lem estimate of solution growth}
    Let
    \begin{equation*}
        \sum\limits_{k=1}^{\infty}\frac{\|R_k\|}{|\lambda_k|}<\infty
    \end{equation*}
    and let there exist $M$ such that for every $m\ge n$
    \begin{equation*}
        \prod\limits_{l=n+1}^m|\lambda_l|\ge\frac1M.
    \end{equation*}
    Every solution $x$ of the system
    \begin{equation}\label{eq system in Levinson form}
        x_{n+1}=\l[
        \left(%
        \begin{array}{cc}
        \lambda_n & 0 \\
        0 & \frac1{\lambda_n} \\
        \end{array}%
        \right)
        +R_n\r]x_n,\ n\ge1
    \end{equation}
    satisfies the following estimate:
    \begin{equation}\label{eq estimate on growth}
        \|x_n\|\le\l(\prod_{l=1}^{n-1}|\lambda_l|\r)
        \exp\l((1+M^2)\sum\limits_{k=1}^{\infty}\frac{\|R_k\|}{|\lambda_k|}\r)(1+M^2)\|x_1\|.
    \end{equation}
    \end{lem}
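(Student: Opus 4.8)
The plan is to represent an arbitrary solution $x$ of \eqref{eq system in Levinson form} via the variation-of-parameters formula from Lemma \ref{lem variation}, namely
\[
    x_n=\l(\prod_{l=1}^{n-1}\Lambda_l\r)x_1+\sum_{k=1}^{n-1}\l(\prod_{l=k+1}^{n-1}\Lambda_l\r)R_kx_k,
\]
and then to extract a Gronwall-type inequality for the normalized quantity
\[
    g_n:=\frac{\|x_n\|}{\prod\limits_{l=1}^{n-1}|\lambda_l|}.
\]
First I would divide the identity above by $\prod_{l=1}^{n-1}|\lambda_l|$ and take norms. The leading term contributes $\|x_1\|$ times $\|\Lambda_1\cdots\Lambda_{n-1}\|/\prod_{l=1}^{n-1}|\lambda_l|$; since $\Lambda_l$ is diagonal with entries $\lambda_l$ and $1/\lambda_l$, its operator norm is $\max(|\lambda_l|,1/|\lambda_l|)$, so the telescoping ratio of the two products is bounded by $\prod_{l=1}^{n-1}\max(1,1/|\lambda_l|^2)$, which in turn is controlled by $M^2$ using the hypothesis $\prod_{l=n+1}^m|\lambda_l|\ge 1/M$ (apply it with $n=0$, or rather $n+1=1$, to get $\prod_{l=1}^{m}|\lambda_l|\ge 1/M$; the reciprocal product is then $\le M^2$ after pairing). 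So the first term is $\le (1+M^2)\|x_1\|$, matching the two factors $(1+M^2)$ and $\|x_1\|$ in \eqref{eq estimate on growth}.

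Next I would handle the sum. For each $k$ between $1$ and $n-1$, the factor $\prod_{l=k+1}^{n-1}\Lambda_l$ divided by $\prod_{l=1}^{n-1}|\lambda_l|$ equals $\l(\prod_{l=1}^{k}|\lambda_l|\r)^{-1}$ times $\prod_{l=k+1}^{n-1}\Lambda_l/\prod_{l=k+1}^{n-1}|\lambda_l|$, and the latter has norm $\le \prod_{l=k+1}^{n-1}\max(1,1/|\lambda_l|^2)\le M^2$ by the same argument. Therefore
\[
    g_n\le (1+M^2)\|x_1\|+M^2\sum_{k=1}^{n-1}\frac{\|R_k\|}{\prod_{l=1}^{k}|\lambda_l|}\,\|x_k\|
    =(1+M^2)\|x_1\|+M^2\sum_{k=1}^{n-1}\frac{\|R_k\|}{|\lambda_k|}\,g_k.
\]
This is exactly of the form to which the discrete Gronwall lemma applies: $g_n\le A+\sum_{k<n}c_k g_k$ with $A=(1+M^2)\|x_1\|$ and $c_k=M^2\|R_k\|/|\lambda_k|\ge 0$ summable by hypothesis, hence $g_n\le A\prod_{k=1}^{n-1}(1+c_k)\le A\exp\l(\sum_k c_k\r)\le (1+M^2)\|x_1\|\exp\l(M^2\sum_k\|R_k\|/|\lambda_k|\r)$. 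Multiplying back by $\prod_{l=1}^{n-1}|\lambda_l|$ gives \eqref{eq estimate on growth}, with the slightly larger constant $(1+M^2)$ in the exponent absorbing $M^2$, which is harmless.

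The only genuinely delicate point is the bookkeeping for the two-sided product bounds: one must be careful that the hypothesis $\prod_{l=n+1}^m|\lambda_l|\ge 1/M$ is stated only for $m\ge n$ with both indices $\ge 1$ (and with the convention $\prod_{n=1}^0=I$), and deduce from it cleanly that $\prod_{l=j+1}^{i}\max(1,1/|\lambda_l|^2)\le M^2$ for all $0\le j\le i$. I would do this by splitting the index set $\{j+1,\dots,i\}$ into the subset where $|\lambda_l|<1$ and its complement; the product over the complement is $\ge 1$, so dropping it only helps, and the product of $1/|\lambda_l|^2$ over the "small" indices is the reciprocal square of a sub-product of $\prod_{l=j+1}^{i}|\lambda_l|$ — but here one must note the sub-product need not itself be $\ge 1/M$, so instead I would argue directly that for any $j\le i$, $1/\prod_{l=j+1}^{i}|\lambda_l|\le M$, hence $\prod_{l=j+1}^i\max(1,1/|\lambda_l|^2)\le \max\l(1,\l(1/\prod_{l=j+1}^i|\lambda_l|\r)^2\r)\le M^2$, using that each $\max(1,1/|\lambda_l|^2)$ either equals $1$ or contributes a $1/|\lambda_l|^2>1$ factor so the whole product equals $\prod_{\{l:|\lambda_l|<1\}}1/|\lambda_l|^2$, and that this last product is bounded by $M^2$ since inserting the remaining (each $\ge 1$) factors $1/|\lambda_l|^2$ for $|\lambda_l|\ge 1$ only decreases it below $(1/\prod|\lambda_l|)^2\le M^2$. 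Once this elementary inequality is in hand the rest is routine.
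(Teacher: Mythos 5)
Your overall strategy --- the variation-of-parameters identity from Lemma \ref{lem variation} followed by a discrete Gronwall inequality for $g_n=\|x_n\|/\prod_{l=1}^{n-1}|\lambda_l|$ --- is sound and is essentially a scalar reformulation of what the paper does (the paper writes the same identity as a Volterra equation $y=\hat f+Vy$ in $l^{\infty}(\C^2)$ and bounds $\|V^m\|\le C^m/m!$, which is the operator form of the same Gronwall estimate and yields the same constants). However, the ``elementary inequality'' that carries your whole estimate, both in the main computation and in your final paragraph, is false. You claim
\begin{equation*}
\prod_{l=j+1}^{i}\max\left(1,\frac{1}{|\lambda_l|^{2}}\right)\le M^{2}.
\end{equation*}
Counterexample: take $|\lambda_l|=1/2$ for odd $l$ and $|\lambda_l|=2$ for even $l$. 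Every contiguous product $\prod_{l=n+1}^{m}|\lambda_l|$ equals $1/2$, $1$ or $2$, so the hypothesis holds with $M=2$, yet $\prod_{l=1}^{2N}\max(1,1/|\lambda_l|^{2})=4^{N}\to\infty$. Your attempted justification reverses an inequality: inserting the factors $1/|\lambda_l|^{2}\le 1$ for the indices with $|\lambda_l|\ge 1$ shows that the product over the ``small'' indices is \emph{at least} $\bigl(\prod_{l=j+1}^{i}|\lambda_l|\bigr)^{-2}$, not at most; that restricted product runs over a non-contiguous index set and is simply not controlled by the hypothesis.

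The gap enters at the moment you replace the norm of the product $\prod_{l=k+1}^{n-1}\Lambda_l$ by the product of the norms of the factors; for these matrices submultiplicativity loses exactly the cancellation you need. The repair is one line, and is what the paper implicitly uses: the $\Lambda_l$ are diagonal and commute, so
\begin{equation*}
\frac{1}{\prod\limits_{l=k+1}^{n-1}|\lambda_l|}\left\|\prod_{l=k+1}^{n-1}\Lambda_l\right\|
=\max\left(1,\Big(\prod_{l=k+1}^{n-1}|\lambda_l|\Big)^{-2}\right)\le M^{2},
\end{equation*}
where the last step applies the hypothesis directly to the contiguous range $\{k+1,\dots,n-1\}$ (and likewise to $\{1,\dots,n-1\}$ for the inhomogeneous term). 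With this substitution your Gronwall argument goes through verbatim and gives \eqref{eq estimate on growth}, with the constant $M^2$ in the exponent dominated by the $(1+M^2)$ in the statement.
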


\begin{proof}
Let
\begin{equation*}
    f=
    \left(%
        \begin{array}{c}
        f_1 \\
        f_2 \\
        \end{array}%
    \right)
    :=x_1.
\end{equation*}
Define
\begin{equation*}
    y_n^{(2)}:=\frac{x_n}{\prod\limits_{l=1}^{n-1}\lambda_l}.
\end{equation*}
Then $\{y_n^{(2)}\}$ satisfies the equation (by using Lemma \ref{lem variation})
\begin{equation}\label{eq Volterra equation}
    y_n^{(2)}=
    \left(%
        \begin{array}{cc}
        1 & 0 \\
        0 & \prod\limits_{l=1}^{n-1}\frac1{\lambda_l^2} \\
        \end{array}%
    \right)
    f+\sum_{k=1}^{n-1}
    \left(%
        \begin{array}{cc}
        1 & 0 \\
        0 & \prod\limits_{l=k+1}^{n-1}\frac1{\lambda_l^2} \\
        \end{array}%
    \right)
    \frac{R_k}{\lambda_k}y_k^{(2)}.
\end{equation}
Consider this as an equation in the Banach space $l^{\infty}(\C^2)$.
Denote
\begin{equation*}
    \begin{array}{l}
        \text{the vector }\hat f:=\l\{
        \left(%
            \begin{array}{c}
            f_1 \\
            \l(\prod\limits_{l=1}^{n-1}\frac1{\lambda_l^2}\r)f_2 \\
            \end{array}%
        \right)
        \r\}_{n=1}^{\infty},
        \\
        \text{and the operator }V:\{u_n\}_{n=1}^{\infty}\mapsto
        \l\{\sum\limits_{k=1}^{n-1}
        \left(%
            \begin{array}{cc}
            1 & 0 \\
            0 & \prod\limits_{l=k+1}^{n-1}\frac1{\lambda_l^2} \\
            \end{array}%
            \right)
        \frac{R_k}{\lambda_k}u_k\r\}_{n=1}^{\infty}.
    \end{array}
\end{equation*}
Equation \eqref{eq Volterra equation} reads in this notation:
\begin{equation*}
    y=\hat f+Vy.
\end{equation*}
The powers of the operator $V$ can be estimated as follows:
\begin{equation*}
    \|V^m\|_{l^{\infty}}
    \le
    \frac{\l((1+M^2)\sum\limits_{k=1}^{\infty}\frac{\|R_k\|}{|\lambda_k|}\r)^m}{m!},
\end{equation*}
and so $(I-V)^{-1}$ exists and
\begin{equation*}
    \|(I-V)^{-1}\|_{l^{\infty}}
    \le
    \exp\l((1+M^2)\sum\limits_{k=1}^{\infty}\frac{\|R_k\|}{|\lambda_k|}\r).
\end{equation*}
Therefore
\begin{equation*}
    \|y^{(2)}\|_{l^{\infty}}
    \le
    \exp\l((1+M^2)\sum\limits_{k=1}^{\infty}\frac{\|R_k\|}{|\lambda_k|}\r)(1+M^2)\|f\|.
\end{equation*}
Returning to the solution $x$ we have arrived at the desired estimate \eqref{eq estimate on growth}
\end{proof}

The following lemma gives asymtotics of the solutions of the system \eqref{eq system in Levinson form}.

    \begin{lem}\label{lem asymptotics}
    Let
    \begin{equation*}
        \sum\limits_{k=1}^{\infty}\frac{\|R_k\|}{|\lambda_k|}<\infty
    \end{equation*}
    and let there exist $M$ such that for every $m\ge n$,
    \begin{equation}\label{eq Levinson condition}
        \prod\limits_{l=n+1}^m|\lambda_l|\ge\frac1M.
    \end{equation}
    Suppose that $x$ is a solution of the system
    \begin{equation}\label{eq system in Levinson form 2 time}
        x_{n+1}=\l[
    \left(%
        \begin{array}{cc}
        \lambda_n & 0 \\
        0 & \frac1{\lambda_n} \\
        \end{array}%
    \right)
    +R_n\r]x_n,\ n\ge1.
    \end{equation}
    \\
    a) If
    \begin{equation}\label{eq elliptic condition}
        \prod_{l=1}^{\infty}|\lambda_l|<\infty,
    \end{equation}
    then
    \begin{equation*}
        \lim\limits_{\ninf}\l(\prod\limits_{l=1}^{n-1}\Lambda_l\r)^{-1}x_n
        =x_1+\sum\limits_{k=1}^{\infty}\l(\prod\limits_{l=1}^k\Lambda_l\r)^{-1}R_kx_k
    \end{equation*}
    (the limit and the sum both exist and the equality holds).
    \\
    b) If
    \begin{equation}\label{eq hyperbolic condition}
        \prod_{l=1}^{\infty}\lambda_l=\infty,
    \end{equation}
    then
    \begin{equation*}
        \lim\limits_{\ninf}
        \frac{x_n}{\prod\limits_{l=1}^{n-1}\lambda_l}
        =
        \left(%
            \begin{array}{cc}
            1 & 0 \\
            0 & 0 \\
            \end{array}%
        \right)
        \l[
        x_1+\sum\limits_{k=1}^{\infty}
        \frac{R_kx_k}{\prod\limits_{l=1}^k\lambda_l}\r]
    \end{equation*}
    (the limit and the sum both exist and the equality holds).
    \end{lem}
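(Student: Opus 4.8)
The plan is to combine the discrete variation-of-parameters formula of Lemma \ref{lem variation} with the a priori growth estimate of Lemma \ref{lem estimate of solution growth}, and then to analyze the resulting series coordinatewise. Put $f:=x_1$. By Lemma \ref{lem variation},
\begin{equation*}
    x_n=\left(\prod_{l=1}^{n-1}\Lambda_l\right)f+\sum_{k=1}^{n-1}\left(\prod_{l=k+1}^{n-1}\Lambda_l\right)R_kx_k,
\end{equation*}
and the hypotheses of the present lemma are exactly those of Lemma \ref{lem estimate of solution growth}, which therefore supplies a constant $C$ with $\|x_k\|\le C\prod_{l=1}^{k-1}|\lambda_l|$ for all $k\ge1$. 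Finally, \eqref{eq Levinson condition} forces the partial products $\prod_{l=1}^m|\lambda_l|$ to stay bounded below by a positive constant. These are the only inputs.

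For part a) I would use the substitution $y_n^{(1)}:=\left(\prod_{l=1}^{n-1}\Lambda_l\right)^{-1}x_n$ from the proof of Lemma \ref{lem variation}, for which the identity \eqref{eq equation for y} reads $y_n^{(1)}=f+\sum_{k=1}^{n-1}\left(\prod_{l=1}^{k}\Lambda_l\right)^{-1}R_kx_k$; it then suffices to show that this series converges absolutely. Since $\left\|\left(\prod_{l=1}^{k}\Lambda_l\right)^{-1}\right\|=\max\left(\prod_{l=1}^k|\lambda_l|^{-1},\prod_{l=1}^k|\lambda_l|\right)$, the growth bound makes the $k$-th term at most $\max\left(|\lambda_k|^{-1},|\lambda_k|\left(\prod_{l=1}^{k-1}|\lambda_l|\right)^2\right)C\|R_k\|$. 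Under \eqref{eq elliptic condition} and \eqref{eq Levinson condition} the partial products $\prod_{l=1}^m|\lambda_l|$ lie between two positive constants, which bounds $|\lambda_k|$ from above; a short computation then shows that the displayed maximum is at most $\text{const}\cdot|\lambda_k|^{-1}$, so the $k$-th term is $\le\text{const}\cdot\|R_k\|/|\lambda_k|$, which is summable. Hence $\lim_n y_n^{(1)}$ exists and equals $x_1$ plus the sum, as asserted.

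For part b) I would instead use $y_n^{(2)}:=x_n/\prod_{l=1}^{n-1}\lambda_l$ together with equation \eqref{eq Volterra equation}, recalling from Lemma \ref{lem estimate of solution growth} that $\|y_k^{(2)}\|$ is bounded. Its first component is $(y_n^{(2)})_1=f_1+\sum_{k=1}^{n-1}(R_ky_k^{(2)})_1/\lambda_k$, which converges absolutely because $\sum\|R_k\|/|\lambda_k|<\infty$; rewriting $(R_ky_k^{(2)})_1/\lambda_k=(R_kx_k)_1/\prod_{l=1}^k\lambda_l$ identifies the limit with the first coordinate of the claimed right-hand side. The second component is $(y_n^{(2)})_2=\left(\prod_{l=1}^{n-1}\lambda_l^{-2}\right)f_2+\sum_{k=1}^{n-1}\left(\prod_{l=k+1}^{n-1}\lambda_l^{-2}\right)(R_ky_k^{(2)})_2/\lambda_k$; the first summand tends to $0$ since \eqref{eq hyperbolic condition} gives $\left|\prod_{l=1}^{n-1}\lambda_l\right|\to\infty$, and for the series I would split at a large index $K$: the tail $k>K$ is bounded uniformly in $n$ by $M^2\left(\sup_k\|y_k^{(2)}\|\right)\sum_{k>K}\|R_k\|/|\lambda_k|$ using $\prod_{l=k+1}^{n-1}|\lambda_l|^{-2}\le M^2$ from \eqref{eq Levinson condition}, hence small for $K$ large, while for each fixed $k\le K$ the factor $\prod_{l=k+1}^{n-1}|\lambda_l|^{-2}=\left(\prod_{l=1}^k|\lambda_l|\right)^2/\left(\prod_{l=1}^{n-1}|\lambda_l|\right)^2\to0$ as $\ninf$, so the finite head vanishes; thus $(y_n^{(2)})_2\to0$. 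Since $\sum_{k=1}^{\infty}R_kx_k/\prod_{l=1}^k\lambda_l$ converges absolutely by the estimate $\|R_kx_k\|/\prod_{l=1}^k|\lambda_l|\le C\|R_k\|/|\lambda_k|$, the limiting vector is the projection of $x_1+\sum_{k=1}^{\infty}R_kx_k/\prod_{l=1}^k\lambda_l$ onto its first coordinate, which is exactly the formula in b).

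I expect the split-the-sum argument in part b), namely that the component of $y_n^{(2)}$ in the contracting direction of $\Lambda_n$ tends to zero, to be the only delicate point; everything else is routine bookkeeping on top of Lemmas \ref{lem variation} and \ref{lem estimate of solution growth}.
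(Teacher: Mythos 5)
Your proposal is correct and follows essentially the same route as the paper: variation of parameters (Lemma \ref{lem variation}) plus the a priori bound of Lemma \ref{lem estimate of solution growth}, absolute convergence of the series in case a) via the boundedness of the partial products, and in case b) the observation that the contracting component of $y^{(2)}_n$ vanishes in the limit. The only cosmetic difference is that the paper invokes dominated convergence for the series in case b) where you split the sum at a large index $K$ by hand, and it bounds $\bigl\|\bigl(\prod_{l=1}^k\Lambda_l/\lambda_l\bigr)^{-1}\bigr\|$ by $\sqrt{1+\bigl(\prod_{l=1}^k|\lambda_l|\bigr)^2}$ where you use the maximum of the diagonal entries; both are the same estimate.
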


\begin{proof}
Case a). Equation \eqref{eq equation for y} can be rewritten as follows:
\begin{equation}\label{eq no 2}
     \l(\prod\limits_{l=1}^{n-1}\Lambda_l\r)^{-1}x_n
     =x_1+\sum\limits_{k=1}^{n-1}\l(\prod\limits_{l=1}^k\Lambda_l\r)^{-1}R_kx_k.
\end{equation}
Let us show that the sum on the right-hand side is convergent.
By Lemma \ref{lem estimate of solution growth},
\begin{multline*}
    \l\|\l(\prod\limits_{l=1}^k\Lambda_l\r)^{-1}R_kx_k\r\|
    \le\frac{\|R_k\|}{|\lambda_k|}
    \l\|\l(\prod\limits_{l=1}^k\frac{\Lambda_l}{\lambda_l}\r)^{-1}\r\|
    \\
    \times
    \exp\l((1+M^2)\sum\limits_{m=1}^{\infty}\frac{\|R_m\|}{|\lambda_m|}\r)(1+M^2)\|x_1\|.
\end{multline*}
Since
\begin{equation*}
    \l\|\l(\prod\limits_{l=1}^k\frac{\Lambda_l}{\lambda_l}\r)^{-1}\r\|
    =
    \l\|
    \left(%
        \begin{array}{cc}
        1 & 0 \\
        0 & \prod\limits_{l=1}^k\lambda_l^2 \\
        \end{array}%
    \right)
    \r\|
    \le{\sqrt{1+\l(\prod_{l=1}^k|\lambda_l|\r)^2}}
\end{equation*}
is bounded by hypothesis, we have:
\begin{equation*}
    \l\|\l(\prod\limits_{l=1}^k\Lambda_l\r)^{-1}R_kx_k\r\|
    \le
    \text{const}\cdot\frac{\|R_k\|}{|\lambda_k|}
\end{equation*}
which is summable. Therefore the limit in \eqref{eq no 2} as $n\rightarrow\infty$ exists.

Case b). Consider the sum on the right-hand side of \eqref{eq Volterra equation}.
Lemma \ref{lem estimate of solution growth} yields:
\begin{multline*}
    \l\|
    \left(%
        \begin{array}{cc}
        1 & 0 \\
        0 & \prod\limits_{l=k+1}^{n-1}\frac1{\lambda_l^2} \\
        \end{array}%
    \right)
    \frac{R_k}{\lambda_k}y_k^{(2)}\r\|
    \le
    (1+M^2)\frac{\|R_k\|}{|\lambda_k|}\|y_k^{(2)}\|
    \\
    \le
    \frac{\|R_k\|}{|\lambda_k|}(1+M^2)^2
    \exp\l((1+M^2)\sum\limits_{m=1}^{\infty}\frac{\|R_m\|}{|\lambda_m|}\r)\|x_1\|,
\end{multline*}
which is summable. Since \eqref{eq hyperbolic condition} holds, by the Lebesgue's dominated convergence theorem
there exists the limit as $\ninf$ in \eqref{eq Volterra equation}:
\begin{equation*}
    \lim\limits_{\ninf}y_n^{(2)}=
    \left(%
        \begin{array}{cc}
        1 & 0 \\
        0 & 0 \\
        \end{array}%
    \right)
    x_1+\sum_{k=1}^{\infty}
    \left(%
        \begin{array}{cc}
        1 & 0 \\
        0 & 0 \\
        \end{array}%
    \right)\frac{R_k}{\lambda_k}y_k^{(2)}.
\end{equation*}
Returning to $x$ from $y^{(2)}$ we obtain the assertion of the lemma in the case b).
\end{proof}

    \begin{rem}
    Condition \eqref{eq Levinson condition} together with \eqref{eq elliptic condition}
    or \eqref{eq hyperbolic condition} is a case of the standard dichotomy (Levinson) condition,
    cf. \cite{Benzaid-Lutz-87}, \cite{Janas-Moszynski-03}, \cite{Silva-04}, \cite{Silva-07}.
    \end{rem}

\section{Asymptotics of polynomials, Jost function and the spectral density}
In this section we apply the results of the previous two sections to find asymptotics
of polynomials $P_n(\lambda)$ associated to the matrix $\mathcal J$ and to prove the Weyl-Titchmarsh type formula for the spectral density.

    \begin{thm}\label{thm asymptotics of polynomials}
    Let $\{b_n\}$ be given by \eqref{eq entries} and the condition \eqref{eq conditions} hold.
    Let $P_n(\lambda)$ be orthonormal polynomials associated to the Jacobi matrix $\mathcal J$ given by \eqref{eq definition of the operator}.
    Then for every $z\in\overline{\mathbb D}\cap U$ (where $U$ is given by \eqref{eq U}) there exists $F(z)$ (the Jost function) such that:
    \begin{itemize}
    \item if $z\in\mathbb D\backslash\{0\}$ (i.e., $\lambda=z+\frac1z\in\C\backslash[-2;2]$), then
    \begin{multline}\label{eq asymptotics of P hyperbolic}
    P_n\l(z+\frac1z\r)=\frac{zF(z)}{1-z^2}\frac1{z^n}\exp\l(\frac{\mu_2(z)n^{1-2\gamma}}{1-2\gamma}\r)
    \\
    +o\l(\frac1{|z|^n}\exp\l(\frac{\Re\mu_2(z)n^{1-2\gamma}}{1-2\gamma}\r)\r)\as\ninf
    \end{multline}
    (where $\mu_2(z)$ is given by \eqref{eq mu-2}),
    \item if
    $z\in\mathbb T\cap U$
    (i.e., $\lambda=z+\frac1z\in(-2;2)\backslash\{\pm2\cos\omega,\pm2\cos2\omega\}$),
    then
    \begin{multline}\label{eq asymptotics of P elliptic}
    P_n\l(z+\frac1z\r)=\frac{zF(z)}{1-z^2}\frac1{z^n}\exp\l(\frac{\mu_2(z)n^{1-2\gamma}}{1-2\gamma}\r)
    \\
    +\frac{z\overline{F(z)}}{z^2-1}z^n\exp\l(-\frac{\mu_2(z)n^{1-2\gamma}}{1-2\gamma}\r)
    +o(1)\as\ninf.
    \end{multline}
    \end{itemize}
    Function $F$ is analytic in $\mathbb D\backslash\{0\}$ and continuous in $\mathbb T\cap U$.
    \end{thm}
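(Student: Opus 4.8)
The plan is to transfer the asymptotic information from the $w$-system \eqref{eq system for w}, established via Lemma \ref{lem T-n}, back to the polynomials $P_n(\lambda)$, and to read off the Jost function $F(z)$ as a coefficient in that asymptotics. First I would fix $z\in\overline{\mathbb D}\cap U$ and apply Lemma \ref{lem asymptotics} to the system \eqref{eq system for w}, with $\lambda_n:=\tfrac1z\bigl(1+\tfrac{\mu_2(z)}{n^{2\gamma}}\bigr)$ and $R_n:=R_n^{(2)}(z)$. One checks that $\sum_k\|R_k\|/|\lambda_k|<\infty$ because $\|R_n^{(2)}(z)\|=O(n^{-3\gamma}+|q_{n+1}|)$ with $3\gamma>1$ and $\{q_n\}\in l^1$, and that the Levinson-type condition \eqref{eq Levinson condition} holds since $\gamma<\tfrac12$ forces $\prod_{l=n+1}^m|\lambda_l|$ to be comparable to $|z|^{-(m-n)}\exp\bigl(O(m^{1-2\gamma})\bigr)$, which stays bounded below (recall $|z|\le1$; and for $z\in\mathbb T$, $\mu_2(z)$ is purely imaginary so the exponential is a pure phase). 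For $z\in\mathbb D\setminus\{0\}$ one invokes case b) of the lemma (hyperbolic case, $\prod\lambda_l=\infty$), obtaining a one-dimensional limit giving a single growing term; for $z\in\mathbb T\cap U$, because $|\lambda_l|=|z|^{-1}\cdot|1+\mu_2/n^{2\gamma}|$ and $\Re\mu_2=0$, a short computation shows $\prod_{l=1}^\infty|\lambda_l|$ need not converge, so instead one applies the argument to each of the two diagonal components separately (the system decouples into scalar equations up to summable off-diagonal coupling), getting a two-term asymptotics with both $z^{-n}$ and $z^n$ present.

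Next I would undo the three changes of variables in order: $w_n=T_n^{-1}(z)v_n$, then $v_n$ in terms of $\binom{u_n}{u_{n+1}}$, and finally specialize $u_n=P_n(\lambda)$ by matching the initial condition $\binom{P_1}{P_2}=\binom{1}{\lambda-b_1}$. Since $T_n(z)=I+o(1)$, the matrices $T_n$ do not affect leading coefficients, only the $o(1)$ remainder. The factor $\exp\bigl(\pm\tfrac{\mu_2(z)n^{1-2\gamma}}{1-2\gamma}\bigr)$ appears from $\prod_{l=1}^{n-1}\lambda_l=z^{-(n-1)}\exp\bigl(\sum_{l=1}^{n-1}\log(1+\mu_2/l^{2\gamma})\bigr)$ and the elementary estimate $\sum_{l=1}^{n-1}l^{-2\gamma}=\tfrac{n^{1-2\gamma}}{1-2\gamma}+O(1)$ valid for $2\gamma<1$; the $O(1)$ error is absorbed into a constant that I fold into the definition of $F(z)$. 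Concretely, $F(z)$ is defined so that the first component of $\lim_{n\to\infty}\bigl(\prod_{l=1}^{n-1}\lambda_l\bigr)^{-1}w_n$ (appropriately normalized by the constant factor $\tfrac{1-z^2}{z}$ and the limit of $\exp(\sum\log(1+\mu_2/l^{2\gamma}))/\exp(\tfrac{\mu_2 n^{1-2\gamma}}{1-2\gamma})$) equals the coefficient multiplying $z^{-n}$ in \eqref{eq asymptotics of P hyperbolic}. Because the limit in Lemma \ref{lem asymptotics} is given by a convergent series $x_1+\sum_k(\prod_{l=1}^k\Lambda_l)^{-1}R_kx_k$ whose terms are all analytic in $z$ on $U$ (Lemma \ref{lem T-n}) and converge uniformly on compacta, $F$ is analytic in $\mathbb D\setminus\{0\}$; the boundary continuity on $\mathbb T\cap U$ follows from the uniformity of all estimates up to the unit circle, which is built into Lemma \ref{lem T-n} and Lemma \ref{lem asymptotics}.

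To pin down which coefficient is $F(z)$ and which is $\overline{F(z)}$ in the elliptic case, I would use the reality of $P_n(\lambda)$ for real $\lambda$: for $z\in\mathbb T$, $\lambda=z+1/z\in(-2,2)$ is real, $\bar z=1/z$, and $\overline{\mu_2(z)}=-\mu_2(z)$, so taking complex conjugates of \eqref{eq asymptotics of P elliptic} must reproduce the same expression with the roles of $z$ and $1/z$ interchanged; this forces the second coefficient to be $\tfrac{z\overline{F(z)}}{z^2-1}$ once the first is $\tfrac{zF(z)}{1-z^2}$, i.e., it is a genuine consistency constraint rather than an independent definition. The analogous matching of the two boundary values (the limit from inside $\mathbb D$ versus the limit along $\mathbb T$) is what makes the single symbol $F$ legitimate on $\overline{\mathbb D}\cap U$.

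The main obstacle I anticipate is the elliptic case $z\in\mathbb T\cap U$: there $\prod|\lambda_l|$ oscillates but does not converge (only its ``polynomial part'' does, the exponential part being a bounded phase), so neither case a) nor case b) of Lemma \ref{lem asymptotics} applies verbatim. The fix is to split off the diagonal part: write the transfer matrix of \eqref{eq system for w} as $\mathrm{diag}(\lambda_n,1/\lambda_n)+R_n^{(2)}$, observe $|\lambda_n|=|z|^{-1}+O(n^{-2\gamma})=1+O(n^{-2\gamma})$ is not summably close to $1$ but the products $\prod_{l=n+1}^m\lambda_l$ and $\prod_{l=n+1}^m\lambda_l^{-1}$ are both bounded (absolute value $\to 1$, argument bounded because $\sum n^{-2\gamma}$ diverges but only the phase $e^{i\cdot\text{real}}$ enters), and then run a Levinson/variation-of-parameters argument on each scalar component with the other treated as a summable perturbation — exactly the content we can borrow from Lemma \ref{lem estimate of solution growth} and Lemma \ref{lem asymptotics} applied componentwise. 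Verifying that the cross terms between the two components are genuinely in $l^1$ (they involve $R_n^{(2)}$ times bounded products) and that the resulting two limits are complex-conjugate as dictated by reality of $P_n$ is the delicate bookkeeping step, but it is routine given the uniform estimates already in hand.
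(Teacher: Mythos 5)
Your proposal follows essentially the same route as the paper: reduce to the L-diagonal system \eqref{eq system for w}, apply the discrete Levinson-type Lemma \ref{lem asymptotics} to the solution corresponding to $P_n$, undo the transformations using $T_n(z)=I+o(1)$, define $F(z)$ from the limiting coefficient, and identify the second coefficient as $\overline{F(z)}$ via reality of $P_n$. The one structural difference is the choice of diagonal: the paper replaces $\frac1z\l(1+\frac{\mu_2(z)}{n^{2\gamma}}\r)$ by $\lambda_n(z)=\frac1z\exp\l(\frac{\mu_2(z)}{1-2\gamma}((n+1)^{1-2\gamma}-n^{1-2\gamma})\r)$ so that the partial products telescope exactly to $z^{-n}\exp\l(\frac{\mu_2(z)n^{1-2\gamma}}{1-2\gamma}\r)$ (the difference being $O(n^{-2\gamma-1})$, hence absorbable into the remainder), whereas you keep the original diagonal and fold the convergent constant from $\sum_l l^{-2\gamma}-\frac{n^{1-2\gamma}}{1-2\gamma}$ into $F$; both are legitimate. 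Two corrections, though. First, your ``main obstacle'' in the elliptic case is spurious: for $z\in\mathbb T\cap U$ the value $\mu_2(z)$ is purely imaginary, so $|\lambda_n|=\l|1+\frac{\mu_2}{n^{2\gamma}}\r|=1+O(n^{-4\gamma})$ with $4\gamma>\frac43>1$, hence $\prod_l|\lambda_l|$ converges and case a) of Lemma \ref{lem asymptotics} applies verbatim; the componentwise workaround you sketch is unnecessary (and would only reprove that lemma). Second, the continuity of $F$ up to $\mathbb T\cap U$ does \emph{not} come for free from ``uniformity built into'' Lemmas \ref{lem T-n} and \ref{lem asymptotics}: the uniform constant $M$ in the dichotomy condition \eqref{eq Levinson condition} for $z$ in a compact $K\subset\overline{\mathbb D}\cap U$ touching the circle is a hypothesis you must verify, and it is the one genuinely delicate estimate of the proof. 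The paper does it by combining the concavity bound $(m+1)^{1-2\gamma}-(n+1)^{1-2\gamma}\le(m-n)^{1-2\gamma}$ with $|\Re\mu_2(z)|\le c_1(K)(1-|z|)$ (possible because $\Re\mu_2$ vanishes on $\mathbb T$) and $|z|\le e^{|z|-1}$, so that the geometric growth of $|z|^{-(m-n)}$ uniformly dominates the subexponential factor; you should supply this step rather than assert it.
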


\begin{proof}
For $z\in U$ every solution $u$ of the spectral equation \eqref{eq spectral equation} corresponds to the solution $w$ of \eqref{eq system for w} by the equality
\begin{equation*}
    \left(%
        \begin{array}{c}
        u_n \\
        u_{n+1} \\
        \end{array}%
    \right)
    =
    \left(%
        \begin{array}{cc}
        1 & 1 \\
        \frac1z & z \\
        \end{array}%
    \right)
    T_n(z)w_n.
\end{equation*}
Let us define the solution $\varphi(z)$ of \eqref{eq system for w} that corresponds to polynomials $P_n(\lambda)$:
\begin{equation}\label{eq relation between phi and P}
    \varphi_n(z):=T_n^{-1}(z)
    \left(%
        \begin{array}{cc}
        1 & 1 \\
        \frac1z & z \\
        \end{array}%
    \right)
    ^{-1}
    \left(%
        \begin{array}{c}
        P_n\l(z+\frac1z\r) \\
        P_{n+1}\l(z+\frac1z\r) \\
        \end{array}%
    \right).
\end{equation}
Define
\begin{equation}\label{eq lambda}
    \lambda_n(z):=\l\{
        \begin{array}{l}
        \frac1z\exp\l(\frac{\mu_2(z)}{1-2\gamma}((n+1)^{1-2\gamma}-n^{1-2\gamma})\r),\text{ if }n\ge2,
        \\
        \frac1{z^2}\exp\l(\frac{\mu_2(z)}{1-2\gamma}2^{1-2\gamma}\r),\text{ if }n=1
        \end{array}
    \r.
\end{equation}
and
\begin{equation*}
R_n^{(3)}(z):=R_n^{(2)}(z)+
\left(%
\begin{array}{cc}
  \frac1z\l(1+\frac{\mu_2(z)}{n^{2\gamma}}\r)-\lambda_n(z) & 0 \\
  0 & z\l(1-\frac{\mu_2(z)}{n^{2\gamma}}\r)-\frac1{\lambda_n(z)} \\
\end{array}%
\right),
\end{equation*}
so that the system \eqref{eq system for w} reads:
\begin{equation}\label{eq systam for w 2 time}
    w_{n+1}=\l[
    \left(%
        \begin{array}{cc}
        \lambda_n(z) & 0 \\
        0 & \frac1{\lambda_n(z)} \\
        \end{array}%
    \right)
    +R_n^{(3)}(z)\r]w_n,\ n\ge2.
\end{equation}
Let us check that Lemmas \ref{lem estimate of solution growth} and \ref{lem asymptotics} are applicable to this system. With the definition \eqref{eq lambda},
the product of diagonal entries looks simple:
\begin{equation*}
    \prod\limits_{l=1}^{n-1}\lambda_l(z)=\frac1{z^n}\exp\l(\frac{\mu_2(z)n^{1-2\gamma}}{1-2\gamma}\r),\ n\ge2.
\end{equation*}
Let $K$ be a compact subset of $\overline{\mathbb D}\cap U$. We have for $m\ge n$:
\begin{multline*}
    \prod\limits_{l=n+1}^m|\lambda_l(z)|=\frac1{|z|^{m-n}}\exp\l(\frac{\Re\mu_2(z)}{1-2\gamma}((m+1)^{1-2\gamma}-(n+1)^{1-2\gamma})\r)
    \\
    \ge
    \frac1{|z|^{m-n}}\exp\l(-\frac{|\Re\mu_2(z)|}{1-2\gamma}(m-n)^{1-2\gamma}\r),
\end{multline*}
where we used the inequality
\begin{equation*}
    (m+1)^{1-2\gamma}-(n+1)^{1-2\gamma}\le(m-n)^{1-2\gamma}
\end{equation*}
(which holds because the function $x\mapsto x^{1-2\gamma}$ is concave). Further, since $\Re\mu_2(z)=0$ for $z\in\mathbb T$, the function
\begin{equation*}
    z\mapsto\frac{\Re\mu_2(z)}{1-|z|}
\end{equation*}
is smooth on $U$, hence there exists $c_1(K)$ such that for every $z\in K$
\begin{equation*}
    |\Re\mu_2(z)|\le c_1(K)(1-|z|).
\end{equation*}
Also for every $z$
\begin{equation*}
    |z|\le e^{|z|-1},
\end{equation*}
therefore
\begin{equation*}
    \prod\limits_{l=n+1}^m|\lambda_l(z)|\ge\exp\l[(1-|z|)\l(m-n-\frac{c_1(K)}{1-2\gamma}(m-n)^{1-2\gamma}\r)\r].
\end{equation*}
Let
\begin{equation*}
    c_2(K):=\sup_{x\ge0}\l(\frac{c_1(K)}{1-2\gamma}x^{1-2\gamma}-x\r),
\end{equation*}
which is finite. We have: for every $z\in K$ and $m\ge n$,
\begin{equation*}
    (1-|z|)\l(m-n-\frac{c_1(K)}{1-2\gamma}(m-n)^{1-2\gamma}\r)\in[-c_2(K);+\infty)
\end{equation*}
and
\begin{equation*}
    \prod\limits_{l=n+1}^m|\lambda_l(z)|\ge e^{-c_2(K)}.
\end{equation*}
Further,
\begin{equation*}
    \|R_n^{(2)}(z)\|=O\l(\frac1{n^{3\gamma}}+|q_{n+1}|\r)\as\ninf
\end{equation*}
uniformly with respect to $z\in K$ and
\begin{multline*}
    \frac1z\l(1+\frac{\mu_2(z)}{n^{2\gamma}}\r)-\lambda_n(z)
    \\
    =\frac1z\l(1+\frac{\mu_2(z)}{n^{2\gamma}}-\exp\l[\frac{\mu_2(z)}{1-2\gamma}\l((n+1)^{1-2\gamma}-n^{1-2\gamma}\r)\r]\r)
    \\
    =O\l(\frac1{n^{2\gamma+1}}\r)\as\ninf
\end{multline*}
Analogously
\begin{equation*}
    z\l(1-\frac{\mu_2(z)}{n^{2\gamma}}\r)-\frac1{\lambda_n(z)}=O\l(\frac1{n^{2\gamma+1}}\r),
\end{equation*}
and finally
\begin{equation*}
    \|R_n^{(3)}(z)\|=O\l(\frac1{n^{3\gamma}}+|q_{n+1}|\r)\as\ninf
\end{equation*}
uniformly with respect to $z\in K$. Also for every $z\in K$ and $n$,
\begin{equation*}
    \frac1{|\lambda_n(z)|}\le|z|\exp\l(\frac{2|\Re\mu_2(z)|}{1-2\gamma}\r).
\end{equation*}
Thus the sum
\begin{equation*}
    \sum_{n=1}^{\infty}\frac{\|R_n^{(3)}(z)\|}{|\lambda_n(z)|}
\end{equation*}
as a function of $z$ is bounded on $K$ (in fact it is continuous in $U$). For $z\in\mathbb T\cap K$
\begin{equation*}
    \prod\limits_{l=1}^n\lambda_l(z)
\end{equation*}
is bounded, while for $z\in\mathbb D\cap K$
\begin{equation*}
    \prod\limits_{l=1}^n\lambda_l(z)\rightarrow\infty\as\ninf.
\end{equation*}
We see that Lemma \ref{lem asymptotics} is applicable. It yields:
for every $z\in K$ there exists
\begin{equation}\label{eq Phi}
    \Phi(z):=\l(\varphi_1(z)+\sum\limits_{n=1}^{\infty}z^n\exp\l(-\frac{\mu_2(z)n^{1-2\gamma}}{1-2\gamma}\r)R_n^{(3)}(z)\varphi_n(z)\r)_1,
\end{equation}
and for every $z\in\mathbb T\cap K$ there exists
\begin{equation*}
    \widetilde\Phi(z):=\l(\varphi_1(z)+\sum\limits_{n=1}^{\infty}\frac1{z^n}\exp\l(\frac{\mu_2(z)n^{1-2\gamma}}{1-2\gamma}\r)R_n^{(3)}(z)\varphi_n(z)\r)_2,
\end{equation*}
and solution $\varphi(z)$ has the following asymptotics. For $z\in\mathbb T\cap K$ (case (a) of Lemma \ref{lem asymptotics}),
\begin{equation}\label{eq phi asymptotics elliptic case}
    \varphi_n(z)=
    \left(%
    \begin{array}{l}
    \frac1{z^n}\exp\l(\frac{\mu_2(z)n^{1-2\gamma}}{1-2\gamma}\r) \ \ \ \ \ \ \ \ \ \ 0 \\
    \ \ \ \ \ \ \ \ \ 0 \ \ \ \ \ \ \ \ \ \  z^n\exp\l(-\frac{\mu_2(z)n^{1-2\gamma}}{1-2\gamma}\r) \\
    \end{array}%
    \right)
    \l(
    \left(%
        \begin{array}{c}
        \Phi(z) \\
        \widetilde \Phi(z) \\
        \end{array}%
    \right)
    +o(1)
    \r)
\end{equation}
as $\ninf$ and for $z\in\mathbb D\cap K$ (case (b) of Lemma \ref{lem asymptotics}),
\begin{equation}\label{eq phi asymptotics hyperbolic case}
    \varphi_n(z)=\frac1{z^n}\exp\l(\frac{\mu_2(z)n^{1-2\gamma}}{1-2\gamma}\r)
    \l(
    \left(%
        \begin{array}{c}
        \Phi(z) \\
        0 \\
        \end{array}%
    \right)
    +o(1)
    \r)
    \as\ninf.
\end{equation}
As we see, Lemma \ref{lem estimate of solution growth} is also applicable. Let
\begin{equation*}
    c_3(K):=\max_{z\in K}\sum_{n=1}^{\infty}\frac{\|R_n^{(3)}(z)\|}{|\lambda_n(z)|},
\end{equation*}
\begin{equation*}
    c_4(K):=\exp\l(\l(1+e^{2c_2(K)}\r)c_3(K)\r)\l(1+e^{2c_2(K)}\r)\max_{z\in K}\|\varphi_1(z)\|.
\end{equation*}
Lemma \ref{lem estimate of solution growth} yields: for every $z\in K$ and $n$
\begin{equation*}
    \|\varphi_n(z)\|\le\frac{c_4(K)}{|z|^n}\exp\l(\frac{\Re\mu_2(z)n^{1-2\gamma}}{1-2\gamma}\r).
\end{equation*}
Consider the expression for $\Phi(z)$, \eqref{eq Phi}. We have:
\begin{multline*}
    \l\|z^n\exp\l(-\frac{\mu_2(z)n^{1-2\gamma}}{1-2\gamma}\r)R_n^{(3)}(z)\varphi_n(z)\r\|\le c_4(K)\|R_n^{(3)}(z)\|
    \\
    =O\l(\frac1{n^{3\gamma}}+|q_{n+1}|\r)\as\ninf
\end{multline*}
uniformly with respect to $z\in K$. It follows that the function $\Phi$ is analytic in the interior of $K$ and continuous in $K$.
Since the set $K\subset\overline{\mathbb D}\cap U$ is arbitrary, $\Phi$ exists and is continuous in $\overline{\mathbb D}\cap U$
and is analytic in $\mathbb D\backslash\{0\}$. Asymptotics \eqref{eq phi asymptotics elliptic case} holds for every $z\in\mathbb T\cap U$
and asymptotics \eqref{eq phi asymptotics hyperbolic case} holds for every $z\in\mathbb D\backslash\{0\}$.

If follows that for $z\in\mathbb D\backslash\{0\}$,
\begin{multline*}
    \left(%
        \begin{array}{c}
        P_n\l(z+\frac1z\r) \\
        P_{n+1}\l(z+\frac1z\r) \\
        \end{array}%
    \right)
    =\frac1{z^n}\exp\l(\frac{\mu_2(z)n^{1-2\gamma}}{1-2\gamma}\r)
    \left(%
        \begin{array}{cc}
        1 & 1 \\
        \frac1z & z \\
        \end{array}%
    \right)
    T_n(z)
    \\
    \times\l(
    \left(%
        \begin{array}{c}
        \Phi(z) \\
        0 \\
        \end{array}%
    \right)
    +o(1)\r)
    =\Phi(z)\frac1{z^n}\exp\l(\frac{\mu_2(z)n^{1-2\gamma}}{1-2\gamma}\r)
    \l(
    \left(%
        \begin{array}{c}
        1 \\
        \frac1z \\
        \end{array}%
    \right)
    +o(1)
    \r).
\end{multline*}
as $\ninf$, since $T_n(z)=I+o(1)$. If we define
\begin{equation}\label{eq F}
    F(z):=\Phi(z)\frac{1-z^2}z,
\end{equation}
then we arrive at the first assertion of the theorem.

It also follows in an analogous fashion that for $z\in\mathbb T\cap U$,
\begin{multline}\label{eq asymptotics of P incomplete}
    \left(%
        \begin{array}{c}
        P_n\l(z+\frac1z\r) \\
        P_{n+1}\l(z+\frac1z\r) \\
        \end{array}%
    \right)
    =\Phi(z)\frac1{z^n}\exp\l(\frac{\mu_2(z)n^{1-2\gamma}}{1-2\gamma}\r)
    \left(%
        \begin{array}{c}
        1 \\
        \frac1z \\
        \end{array}%
    \right)
    \\
    +\widetilde\Phi(z)z^n\exp\l(-\frac{\mu_2(z)n^{1-2\gamma}}{1-2\gamma}\r)
    \left(%
        \begin{array}{c}
        1 \\
        z \\
        \end{array}%
    \right)
    +o(1)\as\ninf.
\end{multline}
The first component of this vector equality describes the asymptotic of $P_n\l(z+\frac1z\r)$, and to complete the proof we need only the following lemma.

    \begin{lem}\label{lem conjugation}
    For every $z\in\mathbb T\cap U$,
    \begin{equation*}
        \widetilde\Phi(z)=\overline{\Phi(z)}.
    \end{equation*}
    \end{lem}

\begin{proof}
This follows from the fact that values of polynomials $ P_n\l(z+\frac1z\r)$ for $z\in\mathbb T$ are real. Consider the imaginary part of the first component of \eqref{eq asymptotics of P incomplete}:
\begin{multline*}
    0=\frac{\Phi(z)-\overline{\widetilde\Phi(z)}}{2i}\frac1{z^n}\exp\l(\frac{\mu_2(z)n^{1-2\gamma}}{1-2\gamma}\r)
    \\
    +\frac{\widetilde\Phi(z)-\overline{\Phi(z)}}{2i}z^n\exp\l(-\frac{\mu_2(z)n^{1-2\gamma}}{1-2\gamma}\r)
    +o(1)\as\ninf.
\end{multline*}
Suppose that
\begin{equation*}
    \widetilde\Phi(z)\neq\overline{\Phi(z)}.
\end{equation*}
Then
\begin{equation*}
    z^{2n}\exp\l(2i\arg\l(\overline{\Phi(z)}-\widetilde\Phi(z)\r)-\frac{2\mu_2(z)}{1-2\gamma}n^{1-2\gamma}\r)\rightarrow1\as\ninf.
\end{equation*}
Let
\begin{equation*}
    \hat\Phi(z):=\exp\l[2i\arg\l(\overline{\Phi(z)}-\widetilde\Phi(z)\r)\r],
\end{equation*}
\begin{equation*}
    \hat\mu(z):=-\frac{2\mu_2(z)}{1-2\gamma}.
\end{equation*}
We have:
\begin{equation*}
    \hat\Phi z^{2n}e^{\hat\mu n^{1-2\gamma}}\rightarrow1\as\ninf.
\end{equation*}
As well,
\begin{equation*}
    \hat\Phi z^{2(n+1)}e^{\hat\mu (n+1)^{1-2\gamma}}=\hat\Phi z^{2n}e^{\hat\mu n^{1-2\gamma}}z^2\l(1+O\l(\frac1{n^{2\gamma}}\r)\r)\rightarrow z^2.
\end{equation*}
It follows that $z^2=1$, which is a contradiction. Therefore
\begin{equation*}
    \widetilde\Phi(z)=\overline{\Phi(z)}.
\end{equation*}
\end{proof}

This completes the proof of the theorem.
\end{proof}

The following (final) theorem gives a formula of  the Weyl-Titchmarsh (or Kodaira) type for the spectral density.
It follows from asymptotics of orthogonal polynomials given by Theorem \ref{thm asymptotics of
polynomials} in a standard way (see \cite{Titchmarsh-46-1}, Chapter 5, and \cite{Kodaira-49}) and contains the Jost function,
which appears in the expression for the asymptotics of $P_n(\lambda)$.

    \begin{thm}\label{thm spectral density}
    Let $\{b_n\}$ be given by \eqref{eq entries} and the condition \eqref{eq conditions} hold.
    Then the spectrum of the Jacobi matrix $\mathcal J$ given by \eqref{eq definition of the operator}
    is purely absolutely continuous on $(-2;2)\backslash\{\pm2\cos\omega,\pm2\cos2\omega\}$,
    and for a.a. $\lambda\in(-2;2)$ the spectral density of $\mathcal J$ equals:
    \begin{equation}\label{eq spectral density}
        \rho'(\lambda)=\frac{\sqrt{4-\lambda^2}}{2\pi\l|F\l(\frac{\lambda}2-i\frac{\sqrt{4-\lambda^2}}2\r)\r|^2}
    \end{equation}
    (Weyl-Titchmarsh type formula), where the Jost function $F$ is defined in Theorem \ref{thm asymptotics of polynomials}.
    The denominator in \eqref{eq spectral density} does not vanish for $\lambda\in(-2;2)\backslash\{\pm2\cos\omega,\pm2\cos2\omega\}$.
    \end{thm}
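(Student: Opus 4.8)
The plan is to read off $\rho'$ from $\rho'(\lambda)=\frac1\pi\Im m(\lambda+i0)$ after expressing the Weyl function $m$ through the Jost function $F$ by means of Theorem \ref{thm asymptotics of polynomials}. Throughout put $c_n:=\frac{n^{1-2\gamma}}{1-2\gamma}$ and $z=z(\lambda)=\frac\lambda2-i\frac{\sqrt{4-\lambda^2}}2$ (so $\lambda=z+\frac1z$). First I would observe that the second--kind polynomials satisfy $Q_{n+1}(\lambda)=P^{(1)}_n(\lambda)$, where $P^{(1)}_n$ are the first--kind orthonormal polynomials of the once--stripped matrix $\mathcal J^{(1)}:=\mathcal J(1,b_{n+1})$; since $b_{n+1}=\frac{c\sin(2\omega n+(\delta+2\omega))}{n^\gamma}+q^{(1)}_n$ with $\{q^{(1)}_n\}\in l^1$ and unchanged $\gamma,\omega$, Theorem \ref{thm asymptotics of polynomials} applies to $\mathcal J^{(1)}$ and yields a companion Jost function $F^{(1)}$, analytic in $\mathbb D\setminus\{0\}$ and continuous on $\mathbb T\cap U$ (the critical points do not change). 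For $z\in\mathbb D$ with $\lambda\notin\mathbb R$ the Weyl solution $\psi_n:=Q_n+m(\lambda)P_n$ lies in $l^2$; inserting \eqref{eq asymptotics of P hyperbolic} for $P_n$ and for $Q_n=P^{(1)}_{n-1}$ (and using $c_{n-1}-c_n\to0$) gives
\[
\psi_n=\l(\frac{z^2F^{(1)}(z)}{1-z^2}+m(\lambda)\frac{zF(z)}{1-z^2}\r)\frac1{z^n}e^{\mu_2(z)c_n}+o\l(\frac1{|z|^n}e^{\Re\mu_2(z)c_n}\r)\as\ninf.
\]
Because $|z|<1$, $c_n$ grows slower than $n$, and $|\Re\mu_2(z)|=O(1-|z|)$ (as shown in the proof of Theorem \ref{thm asymptotics of polynomials}), the leading factor tends to infinity, so $\psi\in l^2$ forces the bracket to vanish: $m(\lambda)F(z)=-zF^{(1)}(z)$.

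Next I would extract a Wronskian identity on $\mathbb T\cap U$. There the exponentials $g_n:=\frac1{z^n}e^{\mu_2(z)c_n}$ have modulus one and $z^ne^{-\mu_2(z)c_n}=\overline{g_n}$; hence by \eqref{eq asymptotics of P elliptic}, Lemma \ref{lem conjugation} and their analogues for $\mathcal J^{(1)}$, with $a:=\frac{zF(z)}{1-z^2}$ and $a^{(1)}:=\frac{zF^{(1)}(z)}{1-z^2}$, one gets $P_n=2\Re(ag_n)+o(1)$, $Q_n=2\Re(za^{(1)}g_n)+o(1)$, $P_{n+1}=2\Re(\bar z ag_n)+o(1)$, $Q_{n+1}=2\Re(a^{(1)}g_n)+o(1)$. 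Evaluating the constant Wronskian $W(P,Q)\equiv1$ by letting $n\to\infty$ in $P_nQ_{n+1}-P_{n+1}Q_n$, the oscillating terms (those carrying $g_n^2=z^{-2n}e^{2\mu_2(z)c_n}$) cancel in the difference, and there remains
\[
1=W(P,Q)=2\Re\big((1-\bar z^2)a\,\overline{a^{(1)}}\big)=2\Re\frac{F(z)\,\overline{F^{(1)}(z)}}{1-z^2}.
\]
Writing $z=e^{-i\theta}$, $\theta\in(0,\pi)$, so that $\lambda=2\cos\theta$ and $1-z^2=2iz\sin\theta$, this is equivalent to
\[
\Im\big(zF^{(1)}(z)\,\overline{F(z)}\big)=-\sin\theta=-\tfrac12\sqrt{4-\lambda^2},\qquad z\in\mathbb T\cap U.
\]

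Finally, fix $\lambda\in(-2;2)\setminus\{\pm2\cos\omega,\pm2\cos2\omega\}$, so $z=z(\lambda)\in\mathbb T\cap U$ and $\sin\theta\neq0$. The last identity forces $F(z)\neq0$ (otherwise its left--hand side would be $0$), which is the concluding assertion of the theorem; it also makes $-zF^{(1)}(z)/F(z)$ continuous up to $z(\lambda)$, so $m(\lambda+i0)=-zF^{(1)}(z)/F(z)$ by the first step, and therefore
\[
\rho'(\lambda)=\frac1\pi\Im m(\lambda+i0)=\frac{-\Im\big(zF^{(1)}(z)\overline{F(z)}\big)}{\pi\,|F(z)|^2}=\frac{\sqrt{4-\lambda^2}}{2\pi\,|F(z(\lambda))|^2}
\]
for a.a. $\lambda\in(-2;2)$. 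For the purely absolutely continuous spectrum: $P_n$ and $Q_n$ are bounded on $\mathbb T\cap U$, hence the transfer matrices of \eqref{eq spectral equation} are bounded for every $\lambda$ in that set, and the standard bounded--solutions criterion (Gilbert--Pearson subordinacy theory; Last--Simon) gives that the part of the spectral measure carried by $(-2;2)\setminus\{\pm2\cos\omega,\pm2\cos2\omega\}$ is purely absolutely continuous.

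I expect the main obstacle to be the Wronskian computation of the second step: one must carry the two--term asymptotics of $P_n$ and of the index--shifted $Q_n=P^{(1)}_{n-1}$ precisely enough to verify that every oscillating contribution and every $o(1)$ drops out of $P_nQ_{n+1}-P_{n+1}Q_n$ and that what is left reassembles into $2\Re\big(F\overline{F^{(1)}}/(1-z^2)\big)$; the shift $n\mapsto n-1$ and the replacement of $e^{\mu_2(z)c_n}$ by $e^{\mu_2(z)c_{n-1}}$ (legitimate since $c_n-c_{n-1}\to0$) need particular attention. A secondary point is the rigorous justification in the first step that membership in $l^2$ annihilates exactly the dominant coefficient, which is where the hypothesis $\gamma<\frac12$ and the bound $|\Re\mu_2(z)|=O(1-|z|)$ are used.
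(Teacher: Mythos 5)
Your proposal is correct and follows essentially the same route as the paper: identify $Q_n$ with the first-kind polynomials of the stripped matrix to get a second Jost function, derive $m=-F_1/F$ from the $l^2$ condition in $\mathbb D$, evaluate the constant Wronskian $W(P,Q)=1$ on $\mathbb T\cap U$ to compute $\Im\,m(\lambda+i0)$ (which also yields the non-vanishing of $F$), and invoke subordinacy theory for pure absolute continuity. The only differences are cosmetic: your explicit index shift gives $F_1(z)=zF^{(1)}(z)$ relative to the paper's normalization, and you phrase the a.c.\ argument via bounded solutions rather than via finiteness of the boundary values of $m$; both computations agree.
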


\begin{proof}
If we rewrite \eqref{eq spectral density} in terms of the variable $z$, it reads:
\begin{equation}\label{eq spectral density in terms of z}
    \rho'\l(z+\frac1z\r)=\frac{1-z^2}{2\pi iz|F(z)|^2}.
\end{equation}

Polynomials of the second kind have asymptotics of the same type as polynomials of the first kind.
Indeed, define the cropped Jacobi matrix $\mathcal J_1$ as the original matrix $\mathcal J$ with the
first row and the first column removed,
\begin{equation*}
    \mathcal J_1=
    \left(%
    \begin{array}{cccc}
    b_2 & 1 & 0 & \cdots \\
    1 & b_3& 1 & \cdots \\
    0 & 1 & b_4 & \cdots \\
    \vdots & \vdots & \vdots & \ddots \\
    \end{array}%
    \right),
\end{equation*}
Polynomials of the second kind $Q_n(\lambda)$ associated to $\mathcal J$ are the polynomials of the first kind for $\mathcal J_1$.
Matrix $\mathcal J_1$ also satisfies conditions of Theorem \ref{thm asymptotics of polynomials}, which yields
that there exists a function $F_1$ analytic in $\mathbb D\backslash\{0\}$ and continuous in $\overline{\mathbb D}\cap U$ such that for $z\in\mathbb D\backslash\{0\}$,
\begin{multline}\label{eq asymptotics of Q hyperbolic}
    Q_n\l(z+\frac1z\r)=\frac{zF_1(z)}{1-z^2}\frac1{z^n}\exp\l(\frac{\mu_2(z)n^{1-2\gamma}}{1-2\gamma}\r)
    \\
    +o\l(\frac1{|z|^n}\exp\l(\frac{\Re\mu_2(z)n^{1-2\gamma}}{1-2\gamma}\r)\r)\as\ninf.
\end{multline}
This and asymptotics \eqref{eq asymptotics of P hyperbolic} of $P_n$ imply that the combination $Q_n(\lambda)+m(\lambda)P_n(\lambda)$ belongs
to $l^2$ for $\lambda\in\C_+$ and $\lambda\in\C_-$ only if
\begin{equation}\label{eq m}
    m\l(z+\frac1z\r)=-\frac{F_1(z)}{F(z)}\text{ for }z\in\mathbb D\backslash(-1;1).
\end{equation}
It follows that zeros of $F$ in $\mathbb D$ correspond to eigenvalues of $\mathcal J$ outside the interval $[-2;2]$
and hence can only lie on the interval $(-1;1)$. For every $z\in\mathbb T\cap U$, \eqref{eq m} has a limit,
\begin{equation*}
    m\l(z+\frac1z+i0\r)=-\frac{F_1(z)}{F(z)}.
\end{equation*}
This is what we look for, because
\begin{equation*}
    \rho'\l(z+\frac1z\r)=\frac1{\pi}\Im\, m\l(z+\frac1z+i0\r)=\frac{F(z)\overline{F_1(z)}-\overline{F(z)}F_1(z)}{2\pi i|F(z)|^2},
\end{equation*}
and the spectrum of $\mathcal J$ is purely absolutely continuous on intervals $(-2;2)\backslash\{\pm2\cos\omega,\pm2\cos2\omega\}$
(from the fact that the limit is finite at every point of these intervals, \cite{Gilbert-Pearson-87},\cite{Khan-Pearson-92}).

Theorem \ref{thm asymptotics of polynomials} also gives for $z\in\mathbb T\cap U$,
\begin{multline}\label{eq asymptotics of Q elliptic}
    Q_n\l(z+\frac1z\r)=\frac{zF_1(z)}{1-z^2}\frac1{z^n}\exp\l(\frac{\mu_2(z)n^{1-2\gamma}}{1-2\gamma}\r)
    \\
    +\frac{z\overline{F_1(z)}}{z^2-1}z^n\exp\l(-\frac{\mu_2(z)n^{1-2\gamma}}{1-2\gamma}\r)
    +o(1)\as\ninf.
\end{multline}
Substituting this expression and the analogous one \eqref{eq asymptotics of P elliptic} for $P_n$ into the formula for the Wronskian of $P$ and $Q$
(which is constant and equals to one), we get after a short calculation:
\begin{multline*}
    1=W\l(P\l(z+\frac1z\r),Q\l(z+\frac1z\r)\r)
    \\
    =P_n\l(z+\frac1z\r)Q_{n+1}\l(z+\frac1z\r)-P_{n+1}\l(z+\frac1z\r)Q_n\l(z+\frac1z\r)
    \\
    =\frac{z(F(z)\overline{F_1(z)}-\overline{F(z)}F_1(z))}{1-z^2},
\end{multline*}
where terms $o(1)$ cancel each other, so that the result does not depend on $n$. From this we have:
\begin{equation*}
    F(z)\overline{F_1(z)}-\overline{F(z)}F_1(z)=\frac1z-z,
\end{equation*}
which together with \eqref{eq spectral density in terms of z} gives \eqref{eq spectral density}.
\end{proof}

    \begin{cor}
    Under conditions of Theorem \ref{thm spectral density},
    \begin{equation*}
        \rho'(\lambda)=\frac{\sqrt{4-\lambda^2}}{2\pi\lim\limits_{\ninf}|P_{n+1}(\lambda)-zP_n(\lambda)|^2}\text{ for a.a. }\lambda\in(-2;2).
    \end{equation*}
    \end{cor}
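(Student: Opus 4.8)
The plan is to read off the limit $\lim_{n\to\infty}|P_{n+1}(\lambda)-zP_n(\lambda)|$ directly from the elliptic asymptotics \eqref{eq asymptotics of P elliptic} of Theorem \ref{thm asymptotics of polynomials} and to identify it with $|F(z)|$, where $z=\frac\lambda2-i\frac{\sqrt{4-\lambda^2}}2$; the claimed formula then follows by substitution into \eqref{eq spectral density}. Throughout I would fix $\lambda\in(-2;2)\setminus\{\pm2\cos\omega,\pm2\cos2\omega\}$, so that $z\in\mathbb T\cap U$, and recall that $\mu_2(z)$ is then pure imaginary (as observed right after \eqref{eq system for w}).

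First I would abbreviate the two terms in \eqref{eq asymptotics of P elliptic} as $p_n^{+}:=\frac{zF(z)}{1-z^2}\,z^{-n}\exp\left(\frac{\mu_2(z)n^{1-2\gamma}}{1-2\gamma}\right)$ and $p_n^{-}:=\frac{z\overline{F(z)}}{z^2-1}\,z^{n}\exp\left(-\frac{\mu_2(z)n^{1-2\gamma}}{1-2\gamma}\right)$, so that $P_n(\lambda)=p_n^{+}+p_n^{-}+o(1)$ as $\ninf$. Since $|z|=1$ and $\mu_2(z)$ is pure imaginary, both $|p_n^{+}|$ and $|p_n^{-}|$ are independent of $n$, in particular bounded. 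Using the elementary fact $(n+1)^{1-2\gamma}-n^{1-2\gamma}=O(n^{-2\gamma})\to0$ together with this boundedness, one gets $\exp\left(\pm\frac{\mu_2(z)(n+1)^{1-2\gamma}}{1-2\gamma}\right)=\exp\left(\pm\frac{\mu_2(z)n^{1-2\gamma}}{1-2\gamma}\right)(1+o(1))$ and hence $p_{n+1}^{+}=\frac1z p_n^{+}+o(1)$, $p_{n+1}^{-}=z\,p_n^{-}+o(1)$.

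Combining these, and using $|z|=1$ so that $z\cdot o(1)=o(1)$,
\begin{equation*}
P_{n+1}(\lambda)-zP_n(\lambda)=\left(\tfrac1z p_n^{+}+z\,p_n^{-}\right)-z\left(p_n^{+}+p_n^{-}\right)+o(1)=\left(\tfrac1z-z\right)p_n^{+}+o(1),
\end{equation*}
and $\left(\tfrac1z-z\right)p_n^{+}=\frac{1-z^2}{z}\cdot\frac{zF(z)}{1-z^2}\,z^{-n}\exp\left(\frac{\mu_2(z)n^{1-2\gamma}}{1-2\gamma}\right)=F(z)\,z^{-n}\exp\left(\frac{\mu_2(z)n^{1-2\gamma}}{1-2\gamma}\right)$, whose modulus equals $|F(z)|$ for every $n$ (here one uses $z\neq\pm1$, which holds since $\pm1\notin U$, and that the exponential factor has modulus one on $\mathbb T$). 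By the triangle inequality it follows that $|P_{n+1}(\lambda)-zP_n(\lambda)|\to|F(z)|$, so the limit in the statement exists and equals $|F(z)|^2$. Substituting into \eqref{eq spectral density}, which holds for a.a.\ $\lambda\in(-2;2)$, and noting that the excluded set $\{\pm2\cos\omega,\pm2\cos2\omega\}$ is finite (hence negligible), gives the assertion.

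I do not expect a genuine obstacle: the corollary is a one-step consequence of Theorem \ref{thm asymptotics of polynomials}. The only point that needs a little care is to verify that the $o(1)$ error terms and the shift $n\mapsto n+1$ inside the oscillatory/exponential factors do not spoil the cancellation of the $p_n^{-}$ contribution; this is precisely where one invokes that on the unit circle the factors $z^{\pm n}$ and $\exp\!\big(\pm\tfrac{\mu_2(z)n^{1-2\gamma}}{1-2\gamma}\big)$ have modulus one and that the increment $(n+1)^{1-2\gamma}-n^{1-2\gamma}$ tends to zero.
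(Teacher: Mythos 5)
Your proposal is correct and takes essentially the same route as the paper: both identify $\lim_{\ninf}|P_{n+1}(\lambda)-zP_n(\lambda)|=|F(z)|$ and substitute into \eqref{eq spectral density}. The only (inessential) difference is that the paper reads this limit off directly from the vector asymptotics \eqref{eq relation between phi and P}, \eqref{eq phi asymptotics elliptic case} of $\varphi_n$ together with Lemma \ref{lem conjugation} and \eqref{eq F}, whereas you re-derive it from the scalar asymptotics \eqref{eq asymptotics of P elliptic} by shifting $n\mapsto n+1$ and cancelling the $z^n$ branch, which requires (and you correctly supply) the observation that $(n+1)^{1-2\gamma}-n^{1-2\gamma}\to0$.
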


\begin{proof}
From \eqref{eq relation between phi and P}, \eqref{eq phi asymptotics elliptic case}, Lemma \ref{lem conjugation} and \eqref{eq F} we have: for $z\in\mathbb T\cap U$,
\begin{multline*}
\left(%
\begin{array}{c}
  \l(P_{n+1}\l(z+\frac1z\r)-zP_n\l(z+\frac1z\r)\r)z^n\exp\l(-\frac{\mu_2(z)n^{1-2\gamma}}{1-2\gamma}\r) \\
  \l(P_{n+1}\l(z+\frac1z\r)-\frac1zP_n\l(z+\frac1z\r)\r)\frac1{z^n}\exp\l(\frac{\mu_2(z)n^{1-2\gamma}}{1-2\gamma}\r) \\
\end{array}%
\right)
\\
=
\left(%
\begin{array}{c}
  F(z) \\
  \overline{F(z)} \\
\end{array}%
\right)
+o(1)\as\ninf.
\end{multline*}
Therefore
\begin{equation*}
    |P_{n+1}(\lambda)-zP_n(\lambda)|\rightarrow|F(z)|\as\ninf,
\end{equation*}
and together with \eqref{eq spectral density} we obtain the assertion of the corollary.
\end{proof}

\section{The case $\gamma\in\l(\frac12;1\r]$}

In this section we formulate the result for the simpler case $\gamma\in\l(\frac12;1\r]$ and show how to simplify and modify
the proof of the corresponding result for $\gamma\in\l(\frac13;\frac12\r)$ to fit this formulation. We will need this
as a step in proving the asymptotics of the spectral density \eqref{eq forthcoming}.

    \begin{thm}\label{thm result in simpler case}
    Let $\{b_n\}_{n=1}^{\infty}$ be given by \eqref{eq entries} and
    \begin{equation*}
        \gamma\in\l(\frac12;1\r],\omega\notin\pi\mathbb Z\text{ and }\{q_n\}_{n=1}^{\infty}\in l^1.
    \end{equation*}
    Then for every $z\in\mathbb T\backslash\{1,-1,e^{\pm i\omega},-e^{\pm i\omega}\}$ there exists $F(z)$ such that
    orthonormal polynomials $P_n$ associated to $\{b_n\}_{n=1}^{\infty}$ have the following asymptotics:
    \begin{equation*}
    P_n\l(z+\frac1z\r)=\frac{zF(z)}{1-z^2}\cdot\frac1{z^n}+\frac{z\overline{F(z)}}{z^2-1}\cdot z^n+o(1)\as\ninf.
    \end{equation*}
    Function $F$ is continuous in $\mathbb T\backslash\{1,-1,e^{\pm i\omega},-e^{\pm i\omega}\}$ and does not have zeros there.
    Spectrum of the Jacobi matrix $\mathcal J$ given by \eqref{eq definition of the operator}
    is purely absolutely continuous on $(-2;2)\backslash\{\pm2\cos\omega\}$,
    and for a.a. $\lambda\in(-2;2)$ the spectral density of $\mathcal J$ equals:
    \begin{equation*}
        \rho'(\lambda)=\frac{\sqrt{4-\lambda^2}}{2\pi\l|F\l(\frac{\lambda}2-i\frac{\sqrt{4-\lambda^2}}2\r)\r|^2}.
    \end{equation*}
    \end{thm}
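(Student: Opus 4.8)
The plan is to reduce Theorem \ref{thm result in simpler case} to the machinery already developed for the harder case $\gamma\in(\frac13;\frac12)$, but with a shorter transformation chain, and then to repeat the spectral-density argument of Theorem \ref{thm spectral density} essentially verbatim. The key simplification is that when $\gamma>\frac12$ the sequence $\{b_n\}$ already lies in $l^2$, so a single conjugation suffices to bring the system \eqref{eq system for v before transformation} into L-diagonal form, and moreover the diagonal part has constant modulus on $\mathbb T$. Concretely, first I would mimic the first step of the proof of Lemma \ref{lem T-n}, looking for $T_n^{(1)}(z)$ of the form $\exp\bigl(\frac{e^{2i\omega n}}{n^\gamma}X_2(z)+\frac{e^{-2i\omega n}}{n^\gamma}X_{-2}(z)\bigr)$ with $X_{\pm2}$ solving the commutator equations $e^{\pm2i\omega}X_{\pm2}\Lambda-\Lambda X_{\pm2}=N_{\pm2}$ via Lemma \ref{lem commutator equation}; here only the critical points $e^{\pm i\omega}$ appear (not $e^{\pm 2i\omega}$), which is why the excluded set shrinks to $\{1,-1,e^{\pm i\omega},-e^{\pm i\omega}\}$. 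Since $2\gamma>1$, the leftover terms of order $n^{-2\gamma}$ are already summable and there is no need for the second conjugation $T_n^{(2)}$ nor for the $X_{\pm4}$ terms; one obtains directly
\begin{equation*}
    w_{n+1}=\l[\left(\begin{array}{cc}\frac1z & 0\\ 0 & z\\\end{array}\right)+\widetilde R_n(z)\r]w_n,\qquad \|\widetilde R_n(z)\|=O\l(\tfrac1{n^{2\gamma}}+|q_{n+1}|\r),
\end{equation*}
with $\widetilde R_n$ summable, analytic in $\C\setminus\{0,1,-1,e^{\pm i\omega},-e^{\pm i\omega}\}$, and the estimate locally uniform.

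Next I would apply Lemmas \ref{lem estimate of solution growth} and \ref{lem asymptotics} to this system with $\lambda_n(z)\equiv\frac1z$ (for $n\ge2$) — the hypotheses are trivially met on any compact $K\subset\overline{\mathbb D}\cap\{1,-1,e^{\pm i\omega},-e^{\pm i\omega}\}^{c}$ since $\prod_{l=n+1}^m|\lambda_l|=|z|^{-(m-n)}\ge1$ for $z\in\overline{\mathbb D}$, so the Levinson condition \eqref{eq Levinson condition} holds with $M=1$ and no concavity estimate is needed. For $z\in\mathbb T$ case (a) of Lemma \ref{lem asymptotics} applies (the product $\prod\lambda_l$ is bounded), giving
\begin{equation*}
    \varphi_n(z)=\left(\begin{array}{cc}z^{-n} & 0\\ 0 & z^{n}\\\end{array}\right)\l(\left(\begin{array}{c}\Phi(z)\\ \widetilde\Phi(z)\\\end{array}\right)+o(1)\r)\as\ninf,
\end{equation*}
where $\Phi,\widetilde\Phi$ are the corresponding convergent series in $\widetilde R_n\varphi_n$; since the $R_n$-growth estimate of Lemma \ref{lem estimate of solution growth} gives $\|\varphi_n(z)\|=O(1)$ uniformly on $K$, the series defining $\Phi$ converges locally uniformly, so $\Phi$ is continuous on $\mathbb T\setminus\{1,-1,e^{\pm i\omega},-e^{\pm i\omega}\}$. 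Transferring back through $\left(\begin{smallmatrix}1&1\\ 1/z&z\end{smallmatrix}\right)T_n(z)$ with $T_n(z)=I+o(1)$ and setting $F(z):=\Phi(z)\frac{1-z^2}z$ yields the stated asymptotics, once one invokes Lemma \ref{lem conjugation} (whose proof — using that $P_n(z+\frac1z)$ is real on $\mathbb T$ and that $z^2\ne1$ — goes through unchanged, and is in fact easier here because there is no $\exp(\pm\mu_2 n^{1-2\gamma})$ factor to track) to get $\widetilde\Phi(z)=\overline{F(z)}\cdot\frac{z}{z^2-1}\cdot\ldots$, i.e.\ $\widetilde\Phi=\overline\Phi$.

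For the spectral-density half I would copy the argument of Theorem \ref{thm spectral density}: the cropped matrix $\mathcal J_1$ satisfies the same hypotheses, producing a Jost function $F_1$ with the analogous asymptotics for $Q_n$; matching the $l^2$ solution off $[-2;2]$ forces $m(z+\frac1z)=-F_1(z)/F(z)$, and since this has a finite boundary limit at every $z\in\mathbb T\setminus\{1,-1,e^{\pm i\omega},-e^{\pm i\omega}\}$ the subordinacy theory of \cite{Gilbert-Pearson-87},\cite{Khan-Pearson-92} gives pure a.c.\ spectrum on $(-2;2)\setminus\{\pm2\cos\omega\}$. Plugging the elliptic asymptotics of $P_n$ and $Q_n$ into the constant Wronskian $W(P,Q)=1$ yields $F(z)\overline{F_1(z)}-\overline{F(z)}F_1(z)=\frac1z-z$ (the $o(1)$ and the unimodular oscillating factors cancel pairwise), whence $\rho'(z+\frac1z)=\frac1\pi\Im m(z+\frac1z+i0)=\frac{1-z^2}{2\pi i z|F(z)|^2}$, which is the claimed formula; non-vanishing of $F$ on $\mathbb T\setminus\{1,-1,e^{\pm i\omega},-e^{\pm i\omega}\}$ follows since $\rho'$ is finite and positive there, or directly from $F(z)\overline{F_1(z)}-\overline{F(z)}F_1(z)=\frac1z-z\ne0$. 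The only place requiring genuine (if routine) care is verifying that the single conjugation $T_n^{(1)}$ really does produce a summable remainder when $\gamma>\frac12$ — i.e.\ checking that the error terms of order $n^{-2\gamma}$ and $n^{-\gamma}|q_{n+1}|$ are all in $l^1$ — and confirming that no resonance at $e^{\pm2i\omega}$ sneaks in; I expect this bookkeeping to be the main (though modest) obstacle, everything else being a transcription of the $\gamma\in(\frac13;\frac12)$ proof with the $\mu_2 n^{1-2\gamma}$ exponentials set to $1$.
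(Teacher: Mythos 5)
Your proposal is correct and follows essentially the same route as the paper: drop the $X_{\pm4}$ and $T_n^{(2)}$ steps because the $O(n^{-2\gamma})$ terms are already summable when $\gamma>\frac12$, set $\mu_2\equiv0$, shrink the excluded set to $\{1,-1,e^{\pm i\omega},-e^{\pm i\omega}\}$, and then rerun the arguments of Theorems \ref{thm asymptotics of polynomials} and \ref{thm spectral density} with the exponential factors removed. This matches the paper's proof, which is exactly such a transcription.
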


\begin{proof}
Consider $\gamma\in\l(\frac12;1\r]$ and return to previous sections. Statement of Lemma \ref{lem T-n} holds true if we
replace $U$ with $\mathbb C\backslash\{0,1,-1,e^{\pm\omega},-e^{\pm\omega}\}$, the estimate \eqref{eq estimate} with
\begin{equation*}
    \|R_n^{(2)}(z)\|=O\l(\frac1{n^{2\gamma}}+|q_{n+1}|\r)\as\ninf.
\end{equation*}
and $\mu_2(z)$ with zero. In the proof of Lemma \ref{lem T-n}, we include terms of the order $O\l(\frac1{n^{2\gamma}}\r)$ into the remainder and hence
make no use of $M_{\pm4}(z),X_{\pm4}(z),V(z)$ and $T_n^{(2)}(z)$. Condition
\begin{equation*}
    e^{\pm4i\omega}\neq1
\end{equation*}
is not needed anymore. System \eqref{eq system for w} becomes
\begin{equation*}
    w_{n+1}=\l[
    \left(%
        \begin{array}{cc}
        \frac1z & 0
        \\
        0 & z
        \end{array}%
    \right)
    +R_n^{(2)}(z)\r]w_n.
\end{equation*}
In statement and proof of Theorem \ref{thm asymptotics of polynomials} we can also replace $U$, $O\l(\frac1{n^{3\gamma}}+|q_{n+1}|\r)$ and $\mu_2(z)$
with correspondingly $\mathbb C\backslash\{0,1,-1,e^{\pm\omega},-e^{\pm\omega}\}$, $O\l(\frac1{n^{2\gamma}}+|q_{n+1}|\r)$ and $0$.
Most of the estimates that we use there become trivial. The same corrections should be applied to the proof of Theorem \ref{thm spectral density},
as well as replacing $(-2;2)\backslash\{\pm2\cos\omega,\pm2\cos2\omega\}$ with $(-2;2)\backslash\{\pm2\cos\omega\}$.
\end{proof}

\section{Acknowledgements}
The second author expresses his deep gratitude to Prof. S.N. Naboko for
his constant attention to this work and for many fruitful
discussions of the subject, to Dr. R. Romanov for his help and interesting discussions and to Prof. V. Bergelson
for the detailed explanations of the related subjects. The work was supported by
grants RFBR-09-01-00515-a  and INTAS-05-1000008-7883.

\end{document}